\newtheorem{definition}{Definition}
\newtheorem{theorem}{Theorem}
\newtheorem{corollary}[theorem]{Corollary}
\newtheorem{proposition}[theorem]{Proposition}
\newcommand{\bsy}{\boldsymbol}
\newcommand{\vanish}[1]{}
\providecommand{\keywords}[1]
{
  \small	
  \textbf{{Keywords:}} #1
}
\title{Multivariate Difference Gon\v{c}arov Polynomials \\
\small{In Memory of Ron Graham} }
\author[1]{Ayomikun Adeniran\thanks{ayomikun.adeniran@pomona.edu}}  
\author[2]{Lauren Snider\thanks{lsnider@math.tamu.edu}}
\author[2]{Catherine Yan\thanks{cyan@math.tamu.edu. The third author is supported in part by Simons Collaboration Grant for Mathematics 704276. }}
\affil[1]{Department of Mathematics, Pomona College, Claremont, CA 91711}
\affil[2]{Department of Mathematics, Texas A\&M University,
College Station, TX 77843}
\date{}
\begin{document}

\maketitle

\begin{abstract} 
Univariate delta Gon\v{c}arov polynomials arise when the classical Gon\v{c}arov interpolation problem in numerical analysis is modified by replacing derivatives with delta operators. When the delta operator under consideration is the backward difference operator, we acquire the univariate difference Gon\v{c}arov polynomials, which have a combinatorial relation to lattice paths in the plane with a given right boundary. In this paper, we extend several algebraic and analytic properties of univariate difference Gon\v{c}arov polynomials to the multivariate case. We then establish a combinatorial interpretation of multivariate difference Gon\v{c}arov polynomials in terms of certain constraints on $d$-tuples of non-decreasing integer sequences. This motivates a connection between multivariate difference Gon\v{c}arov polynomials and a higher-dimensional generalized parking function, the $\bsy{U}$-parking function, from which we derive several enumerative results based on the theory of multivariate delta Gon\v{c}arov polynomials.
\end{abstract} 

\keywords{Difference operator, Gon\v{c}arov polynomials, integer sequences}

\textbf{AMS Classification}: 05A10, 05A15, 05A40

\section{Introduction}

\indent The primary goal of this paper is to extend  results on univariate difference Gon\v{c}arov polynomials to multiple variables, as well as show that such polynomials have a combinatorial interpretation related to integer sequences and generalized parking functions.

Central to these problems is the theory of Gon\v{c}arov polynomials, which arose in the fields of numerical analysis and appromixation theory from an interpolation problem posed by Gon\v{c}arov \cite{goncarov}:

\vspace{2mm}
\noindent \textbf{Gon\v{c}arov Interpolation}: Find a degree $n$ polynomial $p(x)$ such that for $i = 0, 1,\ldots$, the $i$-th derivative $p^{(i)}(x)$ evaluated at a given point $a_i$ has a prescribed value $b_i$.
\vspace{2mm}

\noindent The solution to this interpolation problem consists of a linear combination of the Gon\v{c}arov polynomials $\{g_n(x;a_0, a_1, \ldots,a_{n-1})\}_{n \in \mathbb{N}}$, where $g_n(x;a_0,a_1,\ldots,a_{n-1})$ is the unique polynomial of degree $n$ satisfying the biorthogonality condition
$$ g_n^{(i)}(a_i; a_0,a_1,\ldots,a_{n-1}) = n! \delta_{in}. $$
The Gon\v{c}arov polynomials have been extensively studied for their analytical properties, but their remarkable application to parking functions, which have a vast literature in combinatorics, was surely an unforeseen consequence by Gon\v{c}arov. A (classical) parking function is a sequence $(x_1,x_2,\ldots,x_n)$ of non-negative integers whose non-decreasing rearrangement $x_{(1)} \leq x_{(2)} \leq \cdots \leq x_{(n)}$ satisfies $x_{(i)} < i$ for all $i$. The sequence $x_{(1)} \leq x_{(2)} \leq \cdots \leq x_{(n)}$ is referred to as the order statistics of $(x_1,x_2,\ldots,x_n)$. More generally, 
given a vector $\bsy{u}=(u_1, \dots, u_n)$, 
a $\boldsymbol{u}$-parking function is a sequence $(x_1,x_2,\ldots,x_n)$ of non-negative integers whose order statistics satisfy $x_{(i)} < u_i$. Kung and Yan \cite{kung} showed that Gon\v{c}arov polynomials are in direct correspondence with $\boldsymbol{u}$-parking functions, and hence the numerous algebraic and analytic properties of the former extend necessarily to the latter. 
Classical parking functions  correspond to the case $\bsy{u}=(1,2,\dots, n)$.

When the Gon\v{c}arov interpolation problem is extended to multiple variables with 
partial derivatives $\partial_{x_1}, \dots, \partial_{x_d}$, a basis of the solutions is the set of multivariate Gon\v{c}arov polynomials. Khare, Lorentz, and Yan provide a thorough treatment of bivariate Gon\v{c}arov polynomials in \cite{yan}, establishing numerous properties analogous to those of the univariate case,  and showing that a bivariate Gon\v{c}arov polynomial counts pairs of integer sequences whose order statistics satisfy certain constraints. 
This work naturally extends to $d$-dimensions and leads to 
a notion of  higher-dimensional generalized parking functions,  namely, the $\bsy{U}$-parking functions, where $\bsy{U}$ is a set of nodes in
$\mathbb{N}^d$. 

Another profound generalization of  Gon\v{c}arov polynomials is obtained by applying the rich theory of 
delta operators and finite operator calculus, which is a unified theory on linear operators analogous to the differentiation operator 
$D$ and special polynomials developed by Rota, Kahaner, and Odlyzko \cite{foundation7}.
Replacing  $D$ with an arbitrary delta operator in the Gon\v{c}arov interpolation problem, 
Lorentz, Tringali, and Yan \cite{lorentz, tringali} introduced the delta Gon\v{c}arov polynomials and 
extended many of the algebraic properties of (classical) Gon\v{c}arov polynomials to this generalized case.
They also studied multivariate delta Gon\v{c}arov polynomials  
and characterized those that are of binomial type.
A complete combinatorial interpretation for univariate delta  Gon\v{c}arov polynomials was given by Adeniran and Yan \cite{ayo} in terms of  weighted enumerators in partition lattices and exponential families.

Of particular interest to us are the difference Gon\v{c}arov polynomials, which are closely related to lattice paths and integer 
sequences.  Here the delta operator is  the backward difference operator $\Delta$. In \cite{sun}, the algebraic and combinatorial properties of the univariate difference Gon\v{c}arov polynomials are presented. 
In the current paper we seek to extend these properties to the multivariate case and investigate their combinatorial significance. 
The remainder of the paper is organized as follows. Section 2 recalls the basic definition and properties of univariate difference Gon\v{c}arov polynomials. In Section 3, we specifically examine bivariate difference Gon\v{c}arov polynomials and extend the algebraic and analytic properties of their univariate analogues to two variables.
Section 4 characterizes the relationship between bivariate difference Gon\v{c}arov polynomials and integer sequences. Finally, in Section 5 we state the corresponding results in higher dimensions.


\section{Univariate Difference Gon\v{c}arov Polynomials}

We begin by briefly summarizing the theory of delta Gon\v{c}arov polynomials with a focus on univariate difference Gon\v{c}arov polynomials. The detailed theory on delta operators is developed by Mullin and Rota in \cite{Mullin}, and the theory of delta Gon\v{c}arov polynomials is introduced in \cite{lorentz}.

Consider the vector space $\mathbb{F}[x]$ of all polynomials in the variable $x$ over a field $\mathbb{F}$ of characteristic zero. For $a \in \mathbb{F}$, let $E_a: \mathbb{F}[x]\to\mathbb{F}[x]$
 be the shift operator defined by $E_a(f) (x) = f(x+a)$, and let $\varepsilon(a): \mathbb{F}[x] \to \mathbb{F}$ be the linear functional that evaluates $p(x) \in \mathbb{F}[x]$ at $a \in \mathbb{F}$. 
A delta operator is a linear operator $\mathfrak{d}: \mathbb{F}[x] \to \mathbb{F}[x]$ that is shift-invariant, i.e., $\mathfrak{d} E_a = E_a \mathfrak{d}$ for all $a \in \mathbb{F}$, and satisfies $\mathfrak{d}(x)=c$ for some nonzero constant $c$.  
The differentiation operator $D$ is one example of a delta operator. 
Another example is the 
\textit{backward difference operator} $\Delta=I-E_{-1}$, which is defined by $\Delta p(x) = p(x) - p(x-1)$.

Every delta operator $\mathfrak{d}$ has a unique polynomial sequence $(p_n(x))_{n \in \mathbb{N}}$ such that 
$p_n(x)$ is of degree $n$,  $p_n(0)=\delta_{0n}$, and $\mathfrak{d} p_n(x) = n p_{n-1}(x)$. Such a sequence is called the basic sequence associated to $\mathfrak{d}$.   
Moreover, any shift-invariant operator $T$ can be expanded as a formal power series of $\mathfrak{d}$ by the formula
\[
T=\sum_{k \geq 0} \frac{a_k}{k!} \mathfrak{d}^k,
\]
where $a_k = \varepsilon_0(T(p_k(x))).$

For a delta operator $\mathfrak{d}$, 
suppose that $(\psi_s(\mathfrak{d}))_{s \in \mathbb{N}}$  is a sequence of linear operators of the form
$$ \psi_s(\mathfrak{d}) = \mathfrak{d}^s \sum_{r=0}^{\infty} b_{s,r} \mathfrak{d}^r, $$
where $b_{s,r} \in \mathbb{F}$ and $b_{s,0} \neq 0$. Then there exists a unique sequence of polynomials $(f_n(x))_{n \in \mathbb{N}}$ in $\mathbb{F}[x]$ such that each $f_n(x)$ has degree $n$ and satisfies
$$ \varepsilon(0) \psi_s(\mathfrak{d}) f_n(x) = n! \delta_{sn} \quad \text{for all } s \in \mathbb{N}, $$
where $\delta_{sn}$ is the Kronecker delta. In this case, we say that the polynomial sequence $(f_n(x))_{n \in \mathbb{N}}$ is \textit{biorthogonal} to the sequence of linear operators $(\psi_s(\mathfrak{d}))_{s \in \mathbb{N}}$. In fact, the polynomials $(f_n(x))_{n \in \mathbb{N}}$ form a basis of $\mathbb{F}[x]$. 

Let the delta operator $\mathfrak{d}$ be the backward difference operator $\Delta$. Then  the sequence of upper factorial functions
 $(x^{(n)})_{n\in \mathbb{N}}$ defined by $x^{(0)}=1$ and $x^{(n)}= x(x+1)\cdots(x+n-1)$ for $n \geq 1$ is the basic sequence associated to $\Delta$. 
 Given a sequence $a_0,a_1,\ldots$ of \textit{nodes} in $\mathbb{F}$, 
 let $( \psi_s(\Delta) )_{s=0}^\infty$ be the 
  sequence of linear operators given by the equation
$$ 
\psi_s(\Delta) = \Delta_s \sum_{r=0}^\infty \frac{a_s^{(r)}}{r!} \Delta^r = E_{a_s}\Delta^s. 
$$
The sequence of \emph{difference Gon\v{c}arov polynomials}  is the unique sequence of polynomials biorthogonal to 
$(\psi_s(\Delta))_{s=0}^\infty$.  That is, the $n$-th 
difference Gon\v{c}arov polynomial $\tilde{g}_n(x;a_0,a_1,\dots,a_{n-1})$ is the unique polynomial of degree $n$ satisfying 
\[
\varepsilon(a_s) \Delta^s g_n(x; a_0, \dots, a_{n-1})  =n! \delta_{s,n}, \qquad \text{ for all } s \in \mathbb{N}. 
\]
It is the difference analog of the classical univariate Gon\v{c}arov polynomial, 
which has been comprehensively studied in interpolation theory and approximation theory \cite{boas}. 

The notation for the $n$-th difference Gon\v{c}arov polynomial $\tilde{g}(x;a_0,a_1,\ldots,a_{n-1})$ reflects its dependence on only the nodes $a_0, a_1, \ldots, a_{n-1}$. The preprint \cite{sun} contains 
a set of algebraic and analytic properties for 
 $\tilde{g}_n(x;a_0,a_1,\ldots,a_{n-1})$. 
Since \cite{sun} has never been published, we include those results here for completeness. 

\begin{itemize}
\setlength\itemsep{5mm}
     \item[1.] (\textit{Determinant formula})\ 
     $ \tilde{g}_{n}(x; a_0, a_1, \dots, a_{n-1})  = n! \det M$ where $M$ is an $(n+1) \times (n+1)$ matrix whose $(i,j)$-entry, $0 \leq i,j \leq n$, is given by 
     $$ 
     m_{i,j} = \left\{ 
     \begin{array}{ll} 
      \frac{a_i^{(j-i)}}{(j-i)!}, & \text{if } 0 \leq i \leq j \text{ and } i \leq n-1 \\
      \frac{x^{(j)}}{j!}  & \text{if } i=n \\ 
      0  & \text{otherwise}. 
     \end{array} 
     \right. 
     $$
    \item[2.] (\textit{Expansion formula}) For $p(x) \in \mathbb{F}[x]$ of degree $n$,
    $$ p(x) = \sum_{i=0}^n \frac{\varepsilon(0)\psi_i(\Delta) p(x)}{i!} \tilde{g}_i(x;a_0,a_1,\ldots,a_{i-1}).$$
    \item[3.] (\textit{Linear recursion})
    $$ x^{(n)} = \sum_{i=0}^n \binom{n}{i} a_i^{(n-i)} \tilde{g}_i(x;a_0, a_1, \ldots, a_{i-1}). $$
    \item[4.] (\textit{Appell relation})
    $$ (1-t)^{-x} = \sum_{n=0}^\infty \tilde{g}_n(x;a_0,a_1,\ldots,a_{n-1}) \frac{t^n}{n!(1-t)^{a_n}}. $$
    \item[5.] (\textit{Difference relation})
    $$ \Delta\tilde{g}_n(x;a_0,a_1,\ldots,a_{n-1}) = n! \tilde{g}_{n-1}(x;a_1,a_2,\ldots,a_{n-1}) $$
    and
    $$ \tilde{g}_n(a_0; a_0,a_1,\ldots,a_{n-1}) = \delta_{0n}, $$
    which together uniquely determine the sequence of difference Gon\v{c}arov polynomials.
    \item[6.] (\textit{Shift-invariant formula})
    $$ \tilde{g}_n(x+t;a_0 + t, a_1 + t, \ldots, a_{n-1}+t) = \tilde{g}_n(x; a_0, a_1, \ldots, a_{n-1}). $$
    \item[7.] (\textit{Perturbation formula}) For  positive integers $m$ and $n$ with $m <n$,
    \begin{align*}
    &\tilde{g}_n(x; a_0, \ldots, a_{m-1}, a_m + \delta_m, a_{m+1}, \ldots, a_{n-1}) \\ &\hspace{3mm}= \tilde{g}_n(x; a_0, \ldots, a_{m-1}, a_m, a_{m+1}, \ldots, a_{n-1}) \\ &\hspace{5mm}- \binom{n}{m} \tilde{g}_{n-m}(a_m + \delta_m; a_m, a_{m+1}, \ldots, a_{n-1}) \tilde{g}_m(x;a_0,a_1,\ldots,a_{m-1}).
    \end{align*}
    \item[8.] (\textit{Sheffer relation})
    \[
    \tilde{g}_n(x+y; a_0, a_1, \ldots, a_{n-1}) =\sum_{i=0}^n \binom{n}{i} \tilde{g}_{n-i}(y; a_i,\ldots, a_{n-1}) x^{(i)}.
    \]
    In particular, letting $y=0$ we obtain the expansion of $\tilde{g}_n(x; a_0, a_1, \ldots, a_{n-1})$ under the basis 
    $( x^{(n)})_{n \in \mathbb{N} }$. 
\end{itemize}

Difference Gon\v{c}arov polynomials are useful in combinatorics due to their connection with lattice paths in the plane with a given right boundary. Let $x,n$ be positive integers. A lattice path in $\mathbb{Z}^2$ from $(0,0)$ to $(x-1,n)$ with steps $(1,0)$ and $(0,1)$ can be recorded by a
non-decreasing integer sequence $(x_0, x_1, \dots, x_{n-1})$, where $(x_i,i)$ is the coordinate of the rightmost point on the lattice path and the line $y=i$.  Given $a_0 \leq a_1 \leq \cdots \leq a_{n-1} \in [0, x]^n$, let $LP_n(a_0, a_1, \dots, a_{n-1})$ be the number of lattice paths $(x_0, x_1, \dots,x_{n-1})$ from $(0,0)$ 
to $(x-1, n)$ such that $0\leq x_i < a_i$ for  $0\leq i \leq n$. 
Then we have the following theorem. 

\begin{theorem}\cite[Theorem 4.1]{sun} 
\begin{eqnarray*}
LP_n(a_0, a_1, \dots, a_{n-1}) & = &  \frac{1}{n!} \tilde{g}_n(x; x-a_0, x-a_1, \dots, x-a_{n-1}) \\
& =  & \frac{1}{n!} \tilde{g}_n (0; -a_0, -a_1, \dots, -a_{n-1}). 
\end{eqnarray*} 
\end{theorem}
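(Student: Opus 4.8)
The plan is to reduce the two displayed equalities to a single enumerative identity and then prove that identity by matching a combinatorial ``first-violation'' decomposition against the \emph{linear recursion} listed above. The second equality is the quickest part: by the \emph{shift-invariant formula}, applied with shift $x$ to the node sequence $(-a_0,\dots,-a_{n-1})$,
\[
\tilde{g}_n(x; x-a_0,\dots,x-a_{n-1}) = \tilde{g}_n\big(0+x;\, (-a_0)+x,\dots,(-a_{n-1})+x\big) = \tilde{g}_n(0;-a_0,\dots,-a_{n-1}),
\]
so the two right-hand sides agree. It therefore suffices to establish $LP_n(a_0,\dots,a_{n-1}) = \tfrac{1}{n!}\,\tilde{g}_n(0;-a_0,\dots,-a_{n-1})$. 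Throughout I would write $G_i := \tilde{g}_i(0;-a_0,\dots,-a_{i-1})$; by the collapse just derived, $G_i$ does not depend on $x$, exactly as $LP_i(a_0,\dots,a_{i-1})$ does not (the binding constraint there is $x_j<a_j$ with $a_j\le x$).

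Next I would record the \emph{combinatorial recursion}. For an integer $x\ge a_{n-1}$, the quantity $\tfrac{x^{(n)}}{n!} = \binom{x+n-1}{n}$ counts the non-decreasing sequences $0 \le x_0 \le \cdots \le x_{n-1} \le x-1$. I would classify each such sequence by the least index $i$ with $x_i \ge a_i$, setting $i=n$ when no constraint is violated. Then the prefix $(x_0,\dots,x_{i-1})$ is exactly a sequence enumerated by $LP_i(a_0,\dots,a_{i-1})$, while the suffix $(x_i,\dots,x_{n-1})$ is an arbitrary non-decreasing sequence in $\{a_i,\dots,x-1\}$, of which there are $\binom{(x-a_i)+(n-i)-1}{n-i} = \tfrac{(x-a_i)^{(n-i)}}{(n-i)!}$. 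The point making this a bijection is that the nodes are non-decreasing: since $x_{i-1} < a_{i-1} \le a_i \le x_i$, the junction inequality $x_{i-1} < x_i$ is automatic, so any such concatenation is a valid non-decreasing sequence whose first violation sits at position $i$. This yields
\begin{equation}
\frac{x^{(n)}}{n!} \;=\; \sum_{i=0}^{n} \frac{(x-a_i)^{(n-i)}}{(n-i)!}\, LP_i(a_0,\dots,a_{i-1}),
\tag{$*$}
\end{equation}
valid for all integers $x\ge a_{n-1}$ and hence, both sides being polynomials in $x$, as an identity in $\mathbb{F}[x]$.

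I would then produce a matching \emph{algebraic recursion} from the linear recursion. Applying that recursion to the node sequence $(t-a_0, t-a_1, \dots)$, with $t$ frozen as a constant, gives the polynomial identity $x^{(n)} = \sum_{i=0}^n \binom{n}{i}(t-a_i)^{(n-i)}\,\tilde{g}_i(x; t-a_0,\dots,t-a_{i-1})$ in $x$; evaluating at $x=t$ and invoking the shift-invariant formula (shift by $t$) to rewrite $\tilde{g}_i(t; t-a_0,\dots,t-a_{i-1}) = G_i$ produces
\begin{equation}
\frac{t^{(n)}}{n!} \;=\; \sum_{i=0}^{n} \frac{(t-a_i)^{(n-i)}}{(n-i)!}\,\frac{G_i}{i!},
\tag{$\star$}
\end{equation}
an identity in $t$. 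Finally I would compare $(*)$ and $(\star)$: the polynomials $\tfrac{(t-a_i)^{(n-i)}}{(n-i)!}$ for $i=0,\dots,n$ have pairwise distinct degrees $n-i$, hence are linearly independent, so the coefficients agree term by term, giving $LP_i(a_0,\dots,a_{i-1}) = G_i/i!$ for every $i$; the case $i=n$ is the desired identity. (Equivalently one may induct on $n$, since the $i=n$ coefficient equals $1$ in both expansions.)

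I expect the main obstacle to be the careful setup of $(\star)$, where the node sequence itself depends on the evaluation variable: the substitution $a_i \mapsto t-a_i$ must be performed with $t$ held constant and only afterward specialized to $x=t$, the shift-invariant formula supplying the crucial collapse $\tilde{g}_i(t; t-a_0,\dots,t-a_{i-1}) = \tilde{g}_i(0;-a_0,\dots,-a_{i-1})$ that turns a variable-node expression into the constant $G_i$. A secondary point demanding care is checking that the first-violation map underlying $(*)$ is a genuine bijection, which is precisely where the hypothesis that the nodes satisfy $a_0 \le \cdots \le a_{n-1}$ is used.
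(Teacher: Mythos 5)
Your argument is correct and takes essentially the same route the paper itself uses for the bivariate generalization (Theorem~\ref{main-combin}): a first-violation decomposition of all bounded non-decreasing sequences, matched against the linear recursion, with the shift-invariant formula supplying the collapse $\tilde{g}_n(x;x-a_0,\dots,x-a_{n-1})=\tilde{g}_n(0;-a_0,\dots,-a_{n-1})$. The only cosmetic difference is that you extract the coefficients by linear independence of the polynomials $(t-a_i)^{(n-i)}$ of distinct degrees, whereas the paper concludes by induction from the initial value $\tilde{g}_0=1$; both are valid.
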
 

When $a_i = a$ for all $i$, $\tilde{g}_n(x; a, \dots, a) = (x-a)^{(n)}$. Hence $LP_n(a, \dots, a)= \frac{a^{(n)}}{n!} = \binom{a+n-1}{n}$, which is clearly the number of lattice paths from $(0,0)$ to $(a-1, n)$. 
When $a_i=a+(i-1)b$, $\tilde{g}_n(x; a, a+b, \dots, a+(n-1)b)
= (x-a)(x-a-nb+1)^{(n-1)}$ for $n > 0$. In particular, for
$a=b=1$, $\tilde{g}_n(0; -a_0, -a_1,\dots, -a_{n-1})$ is the Catalan number $\frac{1}{n+1} \binom{2n}{n}$;  when $a=1$, 
$b \in \mathbb{N}$, we get the Fuss-Catalan number  
$\frac{1}{bn+1} \binom{ (b+1)n}{n}$. For general values of 
$a_i$'s, $\tilde{g}_n(0; a_0, \dots, a_{n-1})$ can be computed by the determinant formula or the linear recursion. 

Lattice paths are a classical subject of study in combinatorics, having a vast literature with applications in many fields of mathematics, computer science, physics and statistics.
For the  combinatorial theory of lattice paths, 
see the monograph \cite{Mohanty} by Mohanty and the more recent comprehensive survey \cite{Kratten} by Krattenthaler. 
In addition of being a basic but useful tool in lattice path counting,
difference Gon\v{c}arov polynomials provide a new perspective to lattice paths and 
connect them to other combinatorial structures that are associated with general delta operators. The most notable examples are various generalization of parking functions, which are the combinatorial structures associated with the differential operator. In fact, 
difference Gon\v{c}arov polynomials have already appeared in enumerating parking distributions over a caterpillar graph \cite{graham}, and in enumerating increasing parking sequences \cite{ayo-yan}. 


\section{Bivariate Difference Gon\v{c}arov Polynomials}
By replacing the difference operator $\Delta$ with a set of difference operators $\{\Delta_{x_i}\}_{i=1}^d$, where 
$d$ is a positive integer, we can define a system of multivariate biorthogonal polynomials in $\mathbb{F}[x_1, \dots, x_d]$  that naturally extend the univariate difference Gon\v{c}arov polynomials to multiple variables. A general theory of systems of delta operators and delta Gon\v{c}arov polynomials in multi-variables was introduced in \cite{tringali} .   In this paper we only need a special case: the system of delta operators is $(\Delta_{x_1}, \Delta_{x_2}, \dots, \Delta_{x_d})$, where $\Delta_{x_i}$ is the backward difference operator with respect to the variable $x_i$.  
We will first state the definition and the basic properties from the general theory established in \cite{tringali}. Then we 
present some special algebraic properties of  multivariate difference Gon\v{c}arov polynomials. In the next section we 
discuss the combinatorial significance of such multivariate polynomials. 

For simplicity and clarity,
in Sections 3 and 4 we  restrict our attention to the  bivariate case. 
All the results can be  extended easily to the multivariate cases, which we describe briefly in Section 5.


Fix positive integers $m$ and $n$. We write $(i,j) \preceq (m,n)$ if $ i \leq m$ and $ j \leq n$. Let $S_{m,n}$ denote the poset $\{(i,j): (0,0) \preceq (i,j) \preceq (m,n)\}$ and denote the space of all bivariate polynomials having coordinate degree $(m,n)$ by $\Pi^2_{m,n}$. That is,
$$ \Pi^2_{m,n} = \{ \sum_{(i,j) \in S_{m,n}} b_{i,j} x^i y^j : b_{i,j} \in \mathbb{F} \}.$$


\indent The following is a bivariate variation of the Gon\v{c}arov interpolation problem, with difference operators replacing 
differential operators.\\

\noindent \textbf{Bivariate Gon\v{c}arov Interpolation with Difference Operators.}
Fix a node-set $\bsy{Z} = \{z_{i,j} = (x_{i,j},y_{i,j}) : (i,j) \in S_{m,n} \}$.
Given a set of  numbers $\{b_{i,j} \in \mathbb{F} : (i,j) \in S_{m,n}\}$, find a polynomial $p(x,y) \in \Pi^2_{m,n}$ such that, for all $(i,j) \in S_{m,n}$,
$$ \varepsilon(z_{i,j}) \Delta_x^i \Delta_y^j p(x,y) = b_{i,j}. $$

 
From the general theory developed in \cite{tringali}, we have 
that for any values $\{ b_{i,j}: (i,j) \in S_{m,n}\}$, the bivariate Gon\v{c}arov interpolation problem with difference operators  has a unique solution in the space $\Pi^2_{m,n}$.  
In particular, by taking all but one of $\{b_{i,j}: (i,j) \in S_{m,n}\}$ to be $0$,   we can define the bivariate difference Gon\v{c}arov polynomials. 

\begin{definition}
Let  $\bsy{Z} = \{z_{i,j} = (x_{i,j},y_{i,j}) : (i,j) \in S_{m,n} \}$  be a set of nodes. The bivariate difference Gon\v{c}arov polynomial $\tilde{g}_{m,n}((x,y);\bsy{Z})$ is the unique polynomial in $\Pi^2_{m,n}$ satisfying
\begin{eqnarray}  \label{definition} 
\varepsilon(z_{i,j}) \Delta_x^i \Delta_y^j \tilde{g}_{m,n}((x,y);\bsy{Z}) = m! n! \delta_{m,i} \delta_{n,j}
\end{eqnarray} 
for all $(i,j) \in S_{m,n}$.
\end{definition}

It follows that $\tilde{g}_{0,0}((x,y);\bsy{Z})=1$. 
For the special grid $\bsy{O}=\{z_{i,j}=(0,0): (i,j) \in S_{m,n}\}$, the set of  difference Gon\v{c}arov polynomials $\{\tilde{g}_{m,n}((x,y);\bsy{O})\}_{m,n \in \mathbb{N}}$ is called the basic sequence of the system $(\Delta_x, \Delta_y)$. 
From the interpolation conditions \eqref{definition}, it is easy to check that 
\[
\tilde{g}_{m,n}((x,y);\bsy{O}) = x^{(m)} y^{(n)}.
\]

In general, the set $\{ \tilde{g}_{i,j}((x,y);\bsy{Z}):  (i,j) \in S_{m,n}\}  $ forms a basis to the solutions of the bivariate Gon\v{c}arov interpolation problem with difference operators. 
Next we discuss the algebraic properties of bivariate difference Gon\v{c}arov polynomials, analogous to those of the univariate case. 
We remark that Theorems \ref{Expansion}, \ref{Linear}, and 
\ref{Abel-thm} are special cases of  Propositions 3.5, 3.6 and Theorem 5.1 in \cite{tringali}, 
and the other results are new. 

\begin{theorem}{(Expansion formula)}\label{Expansion} 
For any $p(x,y) \in \Pi^2_{m,n}$,
$$ p(x,y) = \sum_{i=0}^m \sum_{j=0}^n \frac{1}{i!j!} \bigg[ \varepsilon(z_{i,j}) \Delta_x^i \Delta_y^j p(x,y) \bigg] \tilde{g}_{i,j}((x,y);\bsy{Z}). $$
\end{theorem}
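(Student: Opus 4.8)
The plan is to prove the expansion formula by showing that the proposed expansion satisfies the same interpolation conditions as $p(x,y)$ itself, and then invoke the uniqueness of the solution to the bivariate Gon\v{c}arov interpolation problem. Concretely, I would define $q(x,y)$ to be the right-hand side of the claimed identity and verify that $\varepsilon(z_{k,\ell}) \Delta_x^k \Delta_y^\ell\, q(x,y) = \varepsilon(z_{k,\ell}) \Delta_x^k \Delta_y^\ell\, p(x,y)$ for every $(k,\ell) \in S_{m,n}$. Since both $p$ and $q$ lie in $\Pi^2_{m,n}$ and agree on the full interpolation data indexed by $S_{m,n}$, the uniqueness asserted from \cite{tringali} forces $p = q$.

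The key computation is to apply the functional $\varepsilon(z_{k,\ell}) \Delta_x^k \Delta_y^\ell$ to the sum defining $q$. First I would use linearity to move the operator inside the double sum, obtaining
\[
\varepsilon(z_{k,\ell}) \Delta_x^k \Delta_y^\ell\, q(x,y) = \sum_{i=0}^m \sum_{j=0}^n \frac{1}{i!\,j!} \bigl[ \varepsilon(z_{i,j}) \Delta_x^i \Delta_y^j p(x,y) \bigr]\, \varepsilon(z_{k,\ell}) \Delta_x^k \Delta_y^\ell\, \tilde{g}_{i,j}((x,y);\bsy{Z}).
\]
The crucial input is the defining biorthogonality relation \eqref{definition}, which (with indices relabeled to the pair $(i,j)$ and test pair $(k,\ell)$) gives $\varepsilon(z_{k,\ell}) \Delta_x^k \Delta_y^\ell\, \tilde{g}_{i,j}((x,y);\bsy{Z}) = i!\,j!\,\delta_{i,k}\delta_{j,\ell}$. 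Substituting this collapses the double sum: the Kronecker deltas select only the term with $i=k$ and $j=\ell$, the factor $i!\,j! = k!\,\ell!$ cancels the $1/(i!\,j!)$, and what survives is exactly $\varepsilon(z_{k,\ell}) \Delta_x^k \Delta_y^\ell\, p(x,y)$, as desired.

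A minor subtlety I would want to address is the range of indices: the biorthogonality relation as stated in the definition holds for the polynomials $\tilde{g}_{i,j}$ with $(i,j) \in S_{m,n}$, and I must ensure that the test index $(k,\ell)$ ranges over exactly this poset so that every relevant interpolation condition is covered. Since $p \in \Pi^2_{m,n}$ is determined by its data on all of $S_{m,n}$, and the expansion sum runs over precisely $0 \le i \le m$, $0 \le j \le n$, the index bookkeeping matches cleanly. I do not anticipate a genuine obstacle here; the main point to state carefully is the appeal to uniqueness of the interpolation solution, which is the one place the argument relies on prior theory rather than direct verification. The whole proof is essentially the bivariate analog of the standard univariate biorthogonal-expansion argument, so the expected difficulty is low.
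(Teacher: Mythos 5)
Your proof is correct and is essentially the argument the paper intends: the paper's one-line proof invokes the fact that $\{\tilde{g}_{i,j}((x,y);\bsy{Z})\}_{(i,j)\preceq(m,n)}$ is a basis of $\Pi^2_{m,n}$ and then reads off the coefficients via the biorthogonality relation \eqref{definition}, which is the same Kronecker-delta computation you carry out. Your variant of phrasing it as ``both sides satisfy the same interpolation data, so they coincide by uniqueness'' is equivalent to the basis argument, so there is nothing substantively different to flag.
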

\noindent \textit{Proof.} This property follows immediately  from the definition of bivariate difference Gon\v{c}arov polynomials and the fact that $\{\tilde{g}_{i,j}((x,y);\bsy{Z})\}_{(i,j) \preceq  (m,n)}$ forms a basis of $\Pi^2_{m,n}$.   \hfill \qedsymbol

\begin{theorem}{(Linear recursion)} \label{Linear} 
\begin{eqnarray} \label{linear} 
 x^{(m)} y^{(n)} = \sum_{i = 0}^m \sum_{j = 0}^n 
 \binom{m}{i}\binom{n}{j}
 x_{i,j}^{(m-i)} y_{i,j}^{(n-j)} \tilde{g}_{i,j}((x,y);\bsy{Z})
 \end{eqnarray} 
\end{theorem}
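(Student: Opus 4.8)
The plan is to specialize the Expansion formula (Theorem~\ref{Expansion}) to the single polynomial $p(x,y) = x^{(m)} y^{(n)}$, which lies in $\Pi^2_{m,n}$ since it has coordinate degree exactly $(m,n)$. Applying the expansion gives
$$ x^{(m)} y^{(n)} = \sum_{i=0}^m \sum_{j=0}^n \frac{1}{i!\,j!} \Big[ \varepsilon(z_{i,j}) \Delta_x^i \Delta_y^j \big( x^{(m)} y^{(n)} \big) \Big] \tilde{g}_{i,j}((x,y);\bsy{Z}). $$
The entire task then reduces to evaluating the bracketed coefficients and verifying that each equals $\binom{m}{i}\binom{n}{j} x_{i,j}^{(m-i)} y_{i,j}^{(n-j)}$.

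To evaluate those coefficients, I would first factor the action of the operators across the two variables. Since $\Delta_x$ acts only on $x$ and $\Delta_y$ only on $y$, and since $x^{(m)} y^{(n)}$ factors as a product of a function of $x$ and a function of $y$, we have $\Delta_x^i \Delta_y^j \big( x^{(m)} y^{(n)} \big) = \big( \Delta_x^i\, x^{(m)} \big)\big( \Delta_y^j\, y^{(n)} \big)$. Next I would invoke the defining property of the basic sequence recalled in Section~2: because $(x^{(n)})_{n \in \mathbb{N}}$ is the basic sequence associated to $\Delta$, it satisfies $\Delta\, x^{(n)} = n\, x^{(n-1)}$, and iterating $i$ times yields $\Delta^i x^{(m)} = \tfrac{m!}{(m-i)!}\, x^{(m-i)}$ for $0 \le i \le m$.

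Finally I would assemble the pieces. Evaluating at $z_{i,j} = (x_{i,j},y_{i,j})$ via $\varepsilon(z_{i,j})$ yields $\tfrac{m!}{(m-i)!} x_{i,j}^{(m-i)} \cdot \tfrac{n!}{(n-j)!} y_{i,j}^{(n-j)}$, and dividing by $i!\,j!$ converts the factorial ratios into $\binom{m}{i}\binom{n}{j}$, matching the claimed right-hand side. I do not anticipate a genuine obstacle: the only points requiring care are the legitimacy of the variable-wise factorization of the operators, which follows from the fact that $\Delta_x$ and $\Delta_y$ act on disjoint variables together with the product form of $x^{(m)} y^{(n)}$, and the correct bookkeeping of the univariate identity $\Delta^i x^{(m)} = \tfrac{m!}{(m-i)!} x^{(m-i)}$. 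This is precisely the bivariate lift of the univariate linear recursion (property~3 in the list of Section~2), obtained by running the two commuting basic-sequence relations in parallel.
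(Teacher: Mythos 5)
Your proposal is correct and follows exactly the paper's route: the paper proves this by letting $p(x,y) = x^{(m)}y^{(n)}$ in the Expansion formula (Theorem~\ref{Expansion}). You additionally spell out the coefficient computation via $\Delta^i x^{(m)} = \tfrac{m!}{(m-i)!}x^{(m-i)}$ and the variable-wise factorization, which the paper leaves implicit, and that computation is carried out correctly.
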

\noindent \textit{Proof.} It is obtained by letting $p(x,y) = x^{(m)}y^{(n)}$ in the expansion formula. \hfill \qedsymbol

\begin{theorem}{(Appell relation)} \label{Appell} 
$$ (1-s)^{-x}(1-t)^{-y} = \sum_{m=0}^{\infty} \sum_{n=0}^{\infty} \tilde{g}_{m,n}((x,y);\bsy{Z}) \frac{s^m}{(1-s)^{x_{m,n}} m!} \frac{t^n}{(1-t)^{y_{m,n}} n!} $$
\end{theorem}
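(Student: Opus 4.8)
The plan is to derive the Appell relation directly from the linear recursion of Theorem~\ref{Linear} together with the single generating-function identity for upper factorials, namely
\[
(1-t)^{-x} = \sum_{k \geq 0} \frac{x^{(k)}}{k!} t^k,
\]
which is just the binomial series $\binom{-x}{k}(-t)^k = \binom{x+k-1}{k}t^k = \tfrac{x^{(k)}}{k!}t^k$ (equivalently, the univariate Appell relation with all nodes equal to $0$, since $\tilde g_k(x;0,\dots,0)=x^{(k)}$). Multiplying the $x$- and $y$-versions of this identity gives the starting point
\[
(1-s)^{-x}(1-t)^{-y} = \sum_{m \geq 0}\sum_{n \geq 0} \frac{x^{(m)}\,y^{(n)}}{m!\,n!}\, s^m t^n .
\]
Everything that follows is a manipulation of formal power series in $s$ and $t$, so no convergence questions arise and all rearrangements are legitimate.

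\textbf{Key steps.} First I would substitute the linear recursion \eqref{linear}, which expresses $x^{(m)}y^{(n)}$ as $\sum_{i=0}^m\sum_{j=0}^n \binom{m}{i}\binom{n}{j} x_{i,j}^{(m-i)} y_{i,j}^{(n-j)}\,\tilde g_{i,j}((x,y);\bsy{Z})$, into the display above. Next I would interchange the order of summation so that $\tilde g_{i,j}$ is summed on the outside and $(m,n)$ on the inside, and then re-index by writing $m=i+p$ and $n=j+q$ with $p,q\geq 0$. The central bookkeeping identity is
\[
\frac{1}{m!\,n!}\binom{m}{i}\binom{n}{j}\, s^m t^n = \frac{s^{\,i+p}\,t^{\,j+q}}{i!\,p!\,j!\,q!},
\]
which absorbs the binomial coefficients and lets the sum factor. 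This yields
\[
(1-s)^{-x}(1-t)^{-y} = \sum_{i,j} \tilde g_{i,j}((x,y);\bsy{Z})\,\frac{s^i t^j}{i!\,j!}\left(\sum_{p\geq 0}\frac{x_{i,j}^{(p)}}{p!}s^p\right)\!\left(\sum_{q\geq 0}\frac{y_{i,j}^{(q)}}{q!}t^q\right).
\]

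\textbf{Conclusion and main obstacle.} The two inner sums are exactly the upper-factorial generating function applied at the node coordinates, so they collapse to $(1-s)^{-x_{i,j}}$ and $(1-t)^{-y_{i,j}}$; renaming $(i,j)$ as $(m,n)$ produces precisely the claimed formula. The only point requiring care is the legitimacy of reordering the double sum and shifting the indices $m\mapsto i+p$, $n\mapsto j+q$ simultaneously; I expect this to be the main (though routine) obstacle, and it is handled cleanly by treating both sides as elements of the formal power series ring $\mathbb{F}[[s,t]]$ with coefficients in $\Pi^2_{m,n}$ (or its completion), where coefficientwise comparison is valid. An alternative route would be to apply the two-variable expansion formula of Theorem~\ref{Expansion} to each coefficient polynomial of $(1-s)^{-x}(1-t)^{-y}$ viewed as a generating function, but the linear-recursion approach above is the most direct and mirrors the univariate proof of property~4.
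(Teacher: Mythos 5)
Your proposal is correct and follows essentially the same route as the paper's proof: expand $(1-s)^{-x}(1-t)^{-y}$ via the upper-factorial generating function, substitute the linear recursion of Theorem~\ref{Linear}, interchange and re-index the sums, and collapse the inner sums back to $(1-s)^{-x_{i,j}}(1-t)^{-y_{i,j}}$. The bookkeeping identity you isolate is exactly the simplification the paper performs implicitly when it rewrites $\frac{1}{m!}\binom{m}{i}x_{i,j}^{(m-i)}s^m$ as $\frac{s^i}{i!}\cdot\frac{x_{i,j}^{(m-i)}s^{m-i}}{(m-i)!}$.
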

\noindent \textit{Proof.} Using Taylor expansion and the linear recursion formula, we have
\begin{align*}
\frac{1}{(1-s)^x(1-t)^y} &= \sum_{m=0}^\infty \sum_{n=0}^\infty \frac{x^{(m)}s^m}{m!}\frac{y^{(n)}t^n}{n!} \\
				&= \sum_{m=0}^\infty \sum_{n=0}^\infty \frac{s^m t^n}{m!n!} \sum_{i=0}^m \sum_{j=0}^n \binom{m}{i} \binom{n}{j}  x_{i,j}^{(m-i)} y_{i,j}^{(n-j)} \tilde{g}_{i,j}((x,y);\bsy{Z}) \\
				&= \sum_{i=0}^\infty \sum_{m=i}^\infty \sum_{j=0}^\infty \sum_{n=j}^\infty \frac{1}{m!} \binom{m}{i} \frac{1}{n!}
				\binom{n}{j} x_{i,j}^{(m-i)} s^m y_{i,j}^{(n-j)} t^n \tilde{g}_{i,j}((x,y);\bsy{Z}) \\
				&= \sum_{i=0}^\infty \sum_{j=0}^\infty \tilde{g}_{i,j}((x,y);\bsy{Z}) \sum_{m=i}^\infty \frac{1}{m!} \binom{m}{i} x_{i,j}^{(m-i)} s^m \sum_{n=j}^\infty \frac{1}{n!} \binom{n}{j} y_{i,j}^{(n-j)} t^n \\
				&= \sum_{i=0}^\infty \sum_{j=0}^\infty \tilde{g}_{i,j}((x,y);\bsy{Z}) \frac{s^i}{i!}
				\frac{t^j}{j!} 
				\sum_{m=i}^\infty \frac{x_{i,j}^{(m-i)}s^{m-i}}{(m-i)!}  \sum_{n=j}^\infty \frac{y_{i,j}^{(n-j)}t^{n-j}}{(n-j)!} \\
				&= \sum_{i=0}^\infty \sum_{j=0}^\infty  \tilde{g}_{i,j}((x,y);\bsy{Z}) \frac{s^i}{i!}
				\frac{t^j}{j!} 
				\sum_{m=0}^\infty \frac{x_{i,j}^{(m)}s^{m}}{m!}  \sum_{n=0}^\infty \frac{y_{i,j}^{(n)}t^{n}}{n!} \\
                & = \sum_{i=0}^\infty \sum_{j=0}^\infty  \tilde{g}_{i,j}((x,y);\bsy{Z}) \frac{s^i}{i!}
				\frac{t^j}{j!} 				
				\frac{1}{(1-s)^{x_{i,j}}}\frac{1}{(1-t)^{y_{i,j}}}.
\end{align*}
\noindent This completes the proof. \hfill \qedsymbol \\

\indent The following two formulas are analogues of the differential and integral relations of the classical bivariate Gon\v{c}arov polynomials studied in \cite{yan}.  For a node-set $\bsy{Z} = \{z_{i,j} : i,j \in \mathbb{N}\}$, let $\bsy{LZ} = \{w_{i,j} : w_{i,j} = z_{i+1,j}, i,j \in \mathbb{N}\}$ and $\bsy{DZ} = \{w_{i,j} : w_{i,j} = z_{i,j+1}, i,j \in \mathbb{N}\}$. From here on we will assume $\bsy{Z}$ is an infinite grid with indices $i,j \in \mathbb{N}$, and 
$\tilde{g}_{m,n}((x,y);\bsy{Z})$ is determined by the subset  
$\{ z_{i,j} \in \bsy{Z}: (i,j) \in S_{m,n}\}$. 

\begin{theorem}{(Difference relations)}
For any $m, n \in \mathbb{N}$, 
$$ \Delta_x \tilde{g}_{m,n}((x,y);\bsy{Z}) = m \tilde{g}_{m-1,n} ((x,y);\bsy{LZ}),$$ 
$$\Delta_y \tilde{g}_{m,n}((x,y);\bsy{Z}) = n \tilde{g}_{m,n-1} ((x,y);\bsy{DZ}).$$
\end{theorem}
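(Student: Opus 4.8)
The plan is to prove both relations via the uniqueness characterization in \eqref{definition}: since $\tilde{g}_{m-1,n}((x,y);\bsy{LZ})$ is the \emph{unique} polynomial of coordinate degree $(m-1,n)$ satisfying the corresponding biorthogonality conditions, it suffices to show that $\frac{1}{m}\Delta_x \tilde{g}_{m,n}((x,y);\bsy{Z})$ both lies in $\Pi^2_{m-1,n}$ and satisfies exactly those conditions. I will treat the first identity in detail; the second follows by the symmetric argument with the roles of $x$ and $y$ interchanged.

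First I would record two preliminary facts. The operators $\Delta_x$ and $\Delta_y$ commute, since they act on disjoint variables. Moreover, applying $\Delta_x$ to a polynomial in $\Pi^2_{m,n}$ lowers its $x$-degree by one while leaving its $y$-degree unchanged, because $\Delta_x(x^a y^b) = (x^a-(x-1)^a)y^b$ has $x$-degree $a-1$; hence $\Delta_x \tilde{g}_{m,n}((x,y);\bsy{Z}) \in \Pi^2_{m-1,n}$, which is the required degree bound. (When $m=0$ both sides vanish, so we may assume $m \geq 1$.)

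Next I would verify the biorthogonality conditions. The nodes of $\bsy{LZ}$ are $w_{i,j}=z_{i+1,j}$, so $\tilde{g}_{m-1,n}((x,y);\bsy{LZ})$ is characterized by $\varepsilon(z_{i+1,j})\,\Delta_x^i\Delta_y^j\, (\,\cdot\,) = (m-1)!\,n!\,\delta_{m-1,i}\,\delta_{n,j}$ for all $(i,j)\in S_{m-1,n}$. For such an $(i,j)$, I would use commutativity to write $\Delta_x^i \Delta_y^j \Delta_x = \Delta_x^{i+1}\Delta_y^j$ and then invoke \eqref{definition} for $\tilde{g}_{m,n}((x,y);\bsy{Z})$ at the index $(i+1,j)$, which lies in $S_{m,n}$ precisely because $i\leq m-1$ and $j \leq n$. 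This yields
$$ \varepsilon(z_{i+1,j})\,\Delta_x^i\Delta_y^j\Big(\tfrac{1}{m}\Delta_x \tilde{g}_{m,n}((x,y);\bsy{Z})\Big) = \tfrac{1}{m}\,\varepsilon(z_{i+1,j})\,\Delta_x^{i+1}\Delta_y^j \tilde{g}_{m,n}((x,y);\bsy{Z}) = \tfrac{1}{m}\,m!\,n!\,\delta_{m,i+1}\,\delta_{n,j}, $$
and since $m!/m=(m-1)!$ and $\delta_{m,i+1}=\delta_{m-1,i}$, the right-hand side is exactly $(m-1)!\,n!\,\delta_{m-1,i}\,\delta_{n,j}$.

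Thus $\frac{1}{m}\Delta_x \tilde{g}_{m,n}((x,y);\bsy{Z})$ meets both the degree and the interpolation requirements that uniquely determine $\tilde{g}_{m-1,n}((x,y);\bsy{LZ})$, which proves the first relation; the second is identical after swapping $x \leftrightarrow y$, $m \leftrightarrow n$, and $\bsy{LZ}\leftrightarrow\bsy{DZ}$. I do not expect a genuine obstacle here. The only points requiring care are confirming that the shifted index $(i+1,j)$ does not escape $S_{m,n}$, and tracking the shift in the node labels $w_{i,j}=z_{i+1,j}$ against the shift $i\mapsto i+1$ in the operator exponent, so that the two shifts cancel and the Kronecker deltas realign correctly.
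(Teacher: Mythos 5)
Your proposal is correct and follows essentially the same route as the paper's own proof: both verify that $\frac{1}{m}\Delta_x \tilde{g}_{m,n}((x,y);\bsy{Z})$ satisfies the biorthogonality conditions characterizing $\tilde{g}_{m-1,n}((x,y);\bsy{LZ})$ and then invoke uniqueness of the interpolation. Your added checks of the degree bound and the $m=0$ case are sensible details the paper leaves implicit.
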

\noindent \textit{Proof.} We will only prove the first relation, as the second follows by symmetry. We wish to show that $\Delta_x \tilde{g}_{m,n}((x,y);\bsy{Z})$ and $m \tilde{g}_{m-1,n}((x,y);\bsy{LZ})$ satisfy the same biorthogonality conditions. Now the definition of $\tilde{g}_{m,n}((x,y);\bsy{LZ})$ implies that
$$ \varepsilon(z_{i+1,j}) \Delta_x^i \Delta_y^j \bigg[ \Delta_x \tilde{g}_{m,n}((x,y);\bsy{Z}) \bigg] = \varepsilon(z_{i+1,j}) \Delta_x^{i+1} \Delta_y^j \tilde{g}_{m,n}((x,y);\bsy{Z}) = 0$$
when $(i,j) \preceq (m-1,n)$ with $(i,j) \neq (m-1,n)$. When $(i,j) = (m-1,n)$,
$$ \varepsilon(z_{i+1,j}) \Delta_x^{i+1} \Delta_y^j \tilde{g}_{m,n}((x,y);\bsy{Z}) = \varepsilon(z_{m,n}) \Delta_x^m \Delta_y^n \tilde{g}_{m,n}((x,y);\bsy{Z}) = m!n!.$$
Since $m \tilde{g}_{m-1,n}((x,y);\bsy{LZ})$ satisfies these same conditions, uniqueness of the interpolation yields the first difference relation. \hfill \qedsymbol

\begin{corollary} 
The general difference formula is 
$$
\Delta_x^i \Delta_y^j \tilde{g}_{m,n}((x,y); \bsy{Z}) = (m)_i (n)_j \tilde{g}_{m-i, n-j}((x,y); \bsy{L}^i\bsy{D}^j \bsy{Z}), 
$$
where $(t)_k = t(t-1)\cdots (t-k+1)$ is the $k$-th lower factorial of $t$. 
\end{corollary}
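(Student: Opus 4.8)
The plan is to prove the general difference formula by induction, bootstrapping from the two single-step difference relations established in the preceding theorem. The single-step relations state that $\Delta_x \tilde{g}_{m,n}((x,y);\bsy{Z}) = m\,\tilde{g}_{m-1,n}((x,y);\bsy{LZ})$ and $\Delta_y \tilde{g}_{m,n}((x,y);\bsy{Z}) = n\,\tilde{g}_{m,n-1}((x,y);\bsy{DZ})$. Since the operators $\Delta_x$ and $\Delta_y$ act on different variables they commute, so I can apply them in any order and build up the exponents $i$ and $j$ one unit at a time.

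First I would fix the variable $y$ (equivalently, set $j=0$) and prove by induction on $i$ that $\Delta_x^i \tilde{g}_{m,n}((x,y);\bsy{Z}) = (m)_i\,\tilde{g}_{m-i,n}((x,y);\bsy{L}^i\bsy{Z})$. The base case $i=0$ is trivial since $(m)_0 = 1$ and $\bsy{L}^0\bsy{Z} = \bsy{Z}$. For the inductive step, I would write $\Delta_x^{i+1} = \Delta_x \circ \Delta_x^i$, apply the inductive hypothesis to rewrite $\Delta_x^i \tilde{g}_{m,n}((x,y);\bsy{Z})$ as $(m)_i\,\tilde{g}_{m-i,n}((x,y);\bsy{L}^i\bsy{Z})$, and then apply the single-step relation in the variable $x$ to the polynomial $\tilde{g}_{m-i,n}$ over the shifted node-set $\bsy{L}^i\bsy{Z}$. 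This produces a factor of $(m-i)$ and shifts the node-set once more to $\bsy{L}^{i+1}\bsy{Z}$; combined with $(m)_i$ this yields the falling factorial $(m)_{i+1} = (m)_i \cdot (m-i)$, completing the induction.

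Next I would run the identical induction on $j$ in the variable $y$, applied to the polynomial $\tilde{g}_{m-i,n}((x,y);\bsy{L}^i\bsy{Z})$ obtained from the first stage, yielding the factor $(n)_j$ and shifting by $\bsy{D}^j$. Here I must take care that the $\bsy{L}$-shift and the $\bsy{D}$-shift commute as operations on node-sets; indeed $\bsy{L}^i\bsy{D}^j\bsy{Z}$ has nodes $w_{k,\ell} = z_{k+i,\ell+j}$ regardless of the order in which the shifts are applied, so the combined node-set is well-defined and equals $\bsy{L}^i\bsy{D}^j\bsy{Z}$ as written. Composing the two stages and using $\Delta_x^i\Delta_y^j = \Delta_y^j \circ \Delta_x^i$ gives exactly $(m)_i(n)_j\,\tilde{g}_{m-i,n-j}((x,y);\bsy{L}^i\bsy{D}^j\bsy{Z})$.

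I expect the main (though modest) obstacle to be purely bookkeeping: verifying that the node-set shift operators behave correctly under composition and that the index conventions $w_{i,j} = z_{i+1,j}$ for $\bsy{LZ}$ and $w_{i,j} = z_{i,j+1}$ for $\bsy{DZ}$ iterate to $w_{k,\ell} = z_{k+i,\ell+j}$ without off-by-one errors. There is no genuine analytic difficulty, since every step is a direct invocation of the already-proved single-step relations together with the commutativity of $\Delta_x$ and $\Delta_y$; the result is essentially a formal consequence of the theorem it follows.
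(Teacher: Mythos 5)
Your proposal is correct and matches the paper's (implicit) argument: the corollary is stated without proof precisely because it follows by iterating the two single-step difference relations, which is exactly the double induction you carry out. The bookkeeping you flag — that the node-set shifts $\bsy{L}$ and $\bsy{D}$ commute and compose to $w_{k,\ell}=z_{k+i,\ell+j}$ — is handled correctly.
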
 

\vanish{
\begin{theorem}{(Summation formula)}
$$\tilde{g}_{m,n}((x,y);\bsy{Z}) = \sum_{a = x_{0,0}+1}^x \sum_{b = y_{0,0}+1}^y mn \tilde{g}_{m-1,n-1}((a,b);\bsy{LDZ}) + \tilde{g}_{m,n}((x_{0,0},y);\bsy{Z}) + \tilde{g}_{m,n}((x,y_{0,0});\bsy{Z})$$
\end{theorem}
} 

\begin{theorem}{(Shift-invariant formula)} \label{shift} 
Given a node-set $\bsy{Z} = \{z_{i,j}=(x_{i,j},y_{i,j}) : i,j \in \mathbb{N}\}$, let $\bsy{Z} + (\xi,\eta)$ denote the set $\{(x_{i,j} + \xi, y_{i,j} + \eta) : i,j \in \mathbb{N}\}$. Then we have
$$ \tilde{g}_{m,n}((x + \xi, y + \eta); \bsy{Z} + (\xi,\eta)) = \tilde{g}_{m,n}((x,y);\bsy{Z}). $$
\end{theorem}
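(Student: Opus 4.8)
The plan is to show that the left-hand side, viewed as a polynomial in $(x,y)$, satisfies exactly the biorthogonality conditions \eqref{definition} that uniquely characterize $\tilde{g}_{m,n}((x,y);\bsy{Z})$, and then to invoke the uniqueness of the solution to the bivariate Gon\v{c}arov interpolation problem with difference operators (guaranteed by the general theory of \cite{tringali}). Write $h(x,y) := \tilde{g}_{m,n}((x+\xi,y+\eta);\, \bsy{Z}+(\xi,\eta))$. Since composing a polynomial with the translation $(x,y)\mapsto(x+\xi,y+\eta)$ preserves coordinate degree, $h \in \Pi^2_{m,n}$, so it suffices to verify that $h$ meets the interpolation data attached to $\bsy{Z}$.

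First I would record the key structural fact: each of $\Delta_x$ and $\Delta_y$ is shift-invariant, hence commutes with the bivariate shift $\mathcal{E}$ sending $f(x,y)$ to $f(x+\xi,y+\eta)$. Indeed, $\mathcal{E}$ factors as a shift in $x$ followed by a shift in $y$; the former commutes with $\Delta_x$ by shift-invariance in $x$ and with $\Delta_y$ trivially, and symmetrically for the latter. Writing $g(x,y) := \tilde{g}_{m,n}((x,y);\, \bsy{Z}+(\xi,\eta))$, we have $h = \mathcal{E}\,g$, and commutation gives
$$ \Delta_x^i \Delta_y^j h = \Delta_x^i \Delta_y^j \mathcal{E}\,g = \mathcal{E}\,\Delta_x^i \Delta_y^j g. $$

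Next I would evaluate at the node $z_{i,j}=(x_{i,j},y_{i,j})$ for each $(i,j)\in S_{m,n}$. Applying $\mathcal{E}$ and then evaluating at $z_{i,j}$ is the same as evaluating $\Delta_x^i\Delta_y^j g$ at the translated point $(x_{i,j}+\xi, y_{i,j}+\eta)$, which is precisely the $(i,j)$-node of the translated grid $\bsy{Z}+(\xi,\eta)$. Therefore
$$ \varepsilon(z_{i,j})\,\Delta_x^i\Delta_y^j h = \varepsilon\big(z_{i,j}+(\xi,\eta)\big)\,\Delta_x^i\Delta_y^j g = m!\,n!\,\delta_{m,i}\delta_{n,j}, $$
where the last equality is the defining condition \eqref{definition} for $g$ relative to its own node-set $\bsy{Z}+(\xi,\eta)$. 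Thus $h$ and $\tilde{g}_{m,n}((x,y);\bsy{Z})$ carry identical interpolation data in $\Pi^2_{m,n}$, and uniqueness yields $h = \tilde{g}_{m,n}((x,y);\bsy{Z})$.

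The only real obstacle is the bookkeeping in the middle steps: one must check carefully that $\mathcal{E}$ commutes with each $\Delta_x^i\Delta_y^j$ (handled by applying shift-invariance one variable at a time) and that the evaluation/translation identity $\varepsilon(z)\,\mathcal{E} = \varepsilon(z+(\xi,\eta))$ correctly pairs each original node with the matching node of the shifted grid. Everything else is immediate from the uniqueness already established in \cite{tringali}. This argument mirrors the univariate shift-invariant formula (Property 6 in Section 2), and the same template proves the $d$-variable version stated in Section 5.
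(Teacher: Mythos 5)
Your proof is correct and follows essentially the same route as the paper's: both arguments rewrite the shifted polynomial as a shift operator applied to $\tilde{g}_{m,n}((x,y);\bsy{Z}+(\xi,\eta))$, commute the shift with $\Delta_x^i\Delta_y^j$, convert evaluation at $z_{i,j}$ into evaluation at the translated node, and conclude by uniqueness of the interpolation. The only cosmetic difference is that you make the degree-preservation check and the evaluation/translation identity explicit, which the paper leaves implicit.
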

\noindent \textit{Proof.} By definition, $\tilde{g}_{m,n}((x,y); \bsy{Z} +(\xi,\eta))$ is the unique polynomial in $\Pi^2_{m,n}$ satisfying interpolation conditions
$$ \varepsilon(z_{i,j})E_x^\xi E_y^\eta \Delta_x^i \Delta_y^j \tilde{g}_{m,n}((x,y); \bsy{Z} + (\xi,\eta)) = m! n! \delta_{im} \delta_{jn} $$
for $(i,j) \preceq (m,n)$, where $E_x^a$ and $E_y^b$ are the shift operators $(E_x^a f)(x,y) = f(x+a,y)$ and $(E_y^b f)(x,y) = f(x,y+b)$, respectively. Since these shift operators commute with the difference operators $\Delta_x$ and $\Delta_y$, we may equivalently express the interpolation conditions as
$$ \varepsilon(z_{i,j}) \Delta_x^i \Delta_y^j \tilde{g}_{m,n}((x + \xi, y + \eta); \bsy{Z} + (\xi,\eta)) = m! n! \delta_{im} \delta_{jn}, $$
which are the precise conditions satisfied by $\tilde{g}_{m,n}((x,y);\bsy{Z})$. \hfill \qedsymbol

\begin{theorem}{(Perturbation formula)} \label{Perturbation} 
Given a set of nodes $\bsy{Z}$, suppose we perturb the $(i_0,j_0)$-th node of $\bsy{Z}$ to $z_{i_0,j_0}^*$. Let $\bsy{Z}^*$ be the new set of nodes. Then for $(i_0,j_0) \preceq (m,n)$ but 
$(i_0, j_0) \neq (m,n)$, we have
$$ \tilde{g}_{m,n}((x,y);\bsy{Z}^*) = \tilde{g}_{m,n}((x,y);\bsy{Z}) - \binom{m}{i_0} \binom{n}{j_0}  \tilde{g}_{m-i_0,n-j_0}(z_{i_0,j_0}^*;\bsy{L}^{i_0}\bsy{D}^{j_0}\bsy{Z}) \tilde{g}_{i_0,j_0}((x,y);\bsy{Z}). $$
\end{theorem}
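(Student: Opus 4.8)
The plan is to expand $\tilde{g}_{m,n}((x,y);\bsy{Z}^*)$ in the basis $\{\tilde{g}_{i,j}((x,y);\bsy{Z})\}_{(i,j)\preceq(m,n)}$ of $\Pi^2_{m,n}$ furnished by the \emph{unperturbed} node-set, and to show that only two coefficients survive. Since $\tilde{g}_{m,n}((x,y);\bsy{Z}^*)\in\Pi^2_{m,n}$, the expansion formula (Theorem \ref{Expansion}) gives
$$ \tilde{g}_{m,n}((x,y);\bsy{Z}^*)=\sum_{i=0}^{m}\sum_{j=0}^{n}\frac{1}{i!\,j!}\Big[\varepsilon(z_{i,j})\Delta_x^{i}\Delta_y^{j}\tilde{g}_{m,n}((x,y);\bsy{Z}^*)\Big]\tilde{g}_{i,j}((x,y);\bsy{Z}), $$
where each $z_{i,j}$ is a node of $\bsy{Z}$. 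Because $\bsy{Z}$ and $\bsy{Z}^*$ agree everywhere except at $(i_0,j_0)$, for every $(i,j)\neq(i_0,j_0)$ the evaluation point $z_{i,j}$ is simultaneously a node of $\bsy{Z}^*$, so the bracketed quantity equals $m!\,n!\,\delta_{im}\delta_{jn}$ by the defining biorthogonality of $\tilde{g}_{m,n}(\cdot;\bsy{Z}^*)$. Using the hypothesis $(i_0,j_0)\neq(m,n)$, this pins the $(m,n)$-coefficient to $1$ and forces every coefficient to vanish except possibly the one at $(i_0,j_0)$.

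It then remains to evaluate the single surviving coefficient $c_{i_0,j_0}=\frac{1}{i_0!\,j_0!}\varepsilon(z_{i_0,j_0})\Delta_x^{i_0}\Delta_y^{j_0}\tilde{g}_{m,n}((x,y);\bsy{Z}^*)$, which is nonzero precisely because $z_{i_0,j_0}\neq z_{i_0,j_0}^{*}$. Applying the general difference formula to $\bsy{Z}^*$ and using $\frac{(m)_{i_0}}{i_0!}=\binom{m}{i_0}$ and $\frac{(n)_{j_0}}{j_0!}=\binom{n}{j_0}$, I would rewrite
$$ c_{i_0,j_0}=\binom{m}{i_0}\binom{n}{j_0}\,\varepsilon(z_{i_0,j_0})\,\tilde{g}_{m-i_0,n-j_0}((x,y);\bsy{L}^{i_0}\bsy{D}^{j_0}\bsy{Z}^*). $$
The key bookkeeping observation is that $\bsy{L}^{i_0}\bsy{D}^{j_0}$ carries the $(i,j)$-node into the $(i+i_0,j+j_0)$-slot, so it sends the perturbed node $z_{i_0,j_0}^{*}$ to position $(0,0)$. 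Consequently $\bsy{W}^{*}:=\bsy{L}^{i_0}\bsy{D}^{j_0}\bsy{Z}^{*}$ and $\bsy{W}:=\bsy{L}^{i_0}\bsy{D}^{j_0}\bsy{Z}$ differ only in their $(0,0)$-nodes, namely $w_{0,0}^{*}=z_{i_0,j_0}^{*}$ and $w_{0,0}=z_{i_0,j_0}$.

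The crux is therefore a reduced perturbation identity: if two node-sets $\bsy{W},\bsy{W}^{*}$ differ only at $(0,0)$ and $(M,N)\neq(0,0)$ (here $M=m-i_0,\ N=n-j_0$, which is nonzero by $(i_0,j_0)\neq(m,n)$), then $\varepsilon(w_{0,0})\tilde{g}_{M,N}((x,y);\bsy{W}^{*})=-\tilde{g}_{M,N}(w_{0,0}^{*};\bsy{W})$. To establish this I would again invoke the expansion formula, now expanding $\tilde{g}_{M,N}(\cdot;\bsy{W}^{*})$ in the $\bsy{W}$-basis; exactly as in the first paragraph only the $(M,N)$-coefficient (equal to $1$) and the $(0,0)$-coefficient survive, and since $\tilde{g}_{0,0}\equiv1$ this yields
$$ \tilde{g}_{M,N}((x,y);\bsy{W}^{*})=\tilde{g}_{M,N}((x,y);\bsy{W})+\big[\varepsilon(w_{0,0})\,\tilde{g}_{M,N}((x,y);\bsy{W}^{*})\big]. $$
Evaluating both sides at the point $w_{0,0}^{*}$ and using that $\tilde{g}_{M,N}(w_{0,0}^{*};\bsy{W}^{*})=0$ (the $(0,0)$-biorthogonality condition for $\bsy{W}^{*}$, valid because $(M,N)\neq(0,0)$) produces the identity with its minus sign. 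Substituting back into the expression for $c_{i_0,j_0}$ gives $c_{i_0,j_0}=-\binom{m}{i_0}\binom{n}{j_0}\tilde{g}_{m-i_0,n-j_0}(z_{i_0,j_0}^{*};\bsy{L}^{i_0}\bsy{D}^{j_0}\bsy{Z})$, which is exactly the claimed coefficient.

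I expect the main obstacle to be organizational rather than analytic: correctly tracking which node lands in which slot after applying $\bsy{L}^{i_0}\bsy{D}^{j_0}$, and fixing the sign. The minus sign is genuinely the subtle point, as it arises from the asymmetry between evaluating the perturbed polynomial at the \emph{old} node $z_{i_0,j_0}$ versus evaluating the unperturbed polynomial at the \emph{new} node $z_{i_0,j_0}^{*}$; everything else reduces to a double application of the expansion formula together with the normalization $\tilde{g}_{0,0}\equiv1$ and the defining biorthogonality.
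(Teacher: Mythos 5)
Your proof is correct, and it reaches the formula by a route that is organized differently from the paper's. The paper starts from the answer: it sets $h_{m,n}(x,y) = \tilde{g}_{m,n}((x,y);\bsy{Z}^*) + \frac{1}{i_0!j_0!}\bigl[\varepsilon(z_{i_0,j_0}^*)\Delta_x^{i_0}\Delta_y^{j_0}\tilde{g}_{m,n}((x,y);\bsy{Z})\bigr]\tilde{g}_{i_0,j_0}((x,y);\bsy{Z})$ --- note the correction coefficient is the difference of the \emph{unperturbed} polynomial evaluated at the \emph{new} node --- and verifies that $h_{m,n}$ satisfies the interpolation conditions of $\tilde{g}_{m,n}((x,y);\bsy{Z})$; the check at $(i_0,j_0)$ is painless precisely because $\Delta_x^{i_0}\Delta_y^{j_0}\tilde{g}_{i_0,j_0}((x,y);\bsy{Z})$ is the constant $i_0!j_0!$, so the evaluation point is immaterial, and the general difference formula then converts the coefficient directly into the stated $\tilde{g}_{m-i_0,n-j_0}(z_{i_0,j_0}^*;\bsy{L}^{i_0}\bsy{D}^{j_0}\bsy{Z})$. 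You instead expand $\tilde{g}_{m,n}((x,y);\bsy{Z}^*)$ in the $\bsy{Z}$-basis, which hands you the transposed coefficient $\varepsilon(z_{i_0,j_0})\Delta_x^{i_0}\Delta_y^{j_0}\tilde{g}_{m,n}((x,y);\bsy{Z}^*)$ (perturbed polynomial at the old node), and you then must prove the antisymmetry identity $\tilde{g}_{M,N}(w_{0,0};\bsy{W}^*)=-\tilde{g}_{M,N}(w_{0,0}^*;\bsy{W})$ for node-sets differing only at $(0,0)$ to land on the stated form; your derivation of that identity (second expansion, evaluate at $w_{0,0}^*$, use $\tilde{g}_{M,N}(w_{0,0}^*;\bsy{W}^*)=0$) is sound. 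What your route buys is a clean explanation of where the minus sign comes from, plus the reduced identity itself, which is a perturbation formula in its own right; what it costs is the extra lemma that the paper sidesteps by choosing the evaluation point cleverly at the outset. One small inaccuracy that does not affect the argument: your parenthetical claim that $c_{i_0,j_0}$ is nonzero ``precisely because $z_{i_0,j_0}\neq z_{i_0,j_0}^*$'' is not right --- $\tilde{g}_{m-i_0,n-j_0}(z_{i_0,j_0}^*;\bsy{L}^{i_0}\bsy{D}^{j_0}\bsy{Z})$ can vanish for particular node values even when the two nodes differ --- but nothing in the proof depends on this coefficient being nonzero.
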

\noindent \textit{Proof.} Let
$$h_{m,n}(x,y) = \tilde{g}_{m,n}((x,y);\bsy{Z}^*) + \frac{1}{i_0! j_0!} \bigg[ \varepsilon(z_{i_0,j_0}^*) \Delta_x^{i_0} \Delta_y^{j_0} \tilde{g}_{m,n}((x,y);\bsy{Z}) \bigg] \tilde{g}_{i_0,j_0}((x,y);\bsy{Z}).$$
One can easily check that  $h_{m,n}(x,y)$ and $\tilde{g}_{m,n}((x,y);\bsy{Z})$ satisfy the same interpolation conditions and so are equal by uniqueness. Using the  difference relations, we may rewrite
\begin{align*}
h_{m,n}(x,y) &= \tilde{g}_{m,n}((x,y);\bsy{Z}^*) + \binom{m}{i_0} \bigg[ \varepsilon(z_{i_0,j_0}^*) \frac{1}{j_0!} \Delta_y^{j_0} \tilde{g}_{m-i_0,n}((x,y);\bsy{L}^{i_0}\bsy{Z}) \bigg] \tilde{g}_{i_0,j_0}((x,y);\bsy{Z}) \\
			&= \tilde{g}_{m,n}((x,y);\bsy{Z}^*) + \binom{m}{ i_0}
			\binom{n}{j_0} \tilde{g}_{m-i_0,n-j_0}(z_{i_0,j_0}^*;\bsy{L}^{i_0}\bsy{D}^{j_0}\bsy{Z}) \tilde{g}_{i_0,j_0}((x,y);\bsy{Z}),
\end{align*}
and the statement is proved. \hfill \qedsymbol

\textsc{Remark}. In Theorem \ref{Perturbation} if $(i_0,j_0)=(m,n)$, then $\tilde{g}_{m,n}((x,y);\bsy{Z}^*) = \tilde{g}_{m,n}((x,y);\bsy{Z})$. This is because the difference operators are degree-reducing, in the sense that for a polynomial $p(x,y)$ of coordinate degree $(a,b)$, 
$\Delta_x p(x,y)$ is of degree $(a-1, b)$ and $\Delta_y p(x,y)$ is of degree $(a, b-1)$. Hence $\Delta_x^m \Delta_y^n 
\tilde{g}_{m,n}((x,y);\bsy{Z})$ is always a constant, which must 
be equal to $m!n!$ by the interpolation conditions. It also implies that the formula of $\tilde{g}_{m,n}((x,y); \bsy{Z})$ does not depend  on the node $z_{m,n}$.

\begin{theorem}{(Sheffer Relation)} \label{Sheffer} 
\[
\tilde{g}_{m,n}((x+b, y+c); \bsy{Z}) = \sum_{i=0}^m \sum_{j=0}^n \binom{m}{i} \binom{n}{j} \tilde{g}_{m-i, n-j}((b,c); \bsy{L}^i \bsy{D}^j\bsy{Z}) x^{(i)} y^{(j)}. 
\]
\end{theorem}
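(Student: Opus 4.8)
The plan is to expand the left-hand side, viewed as a polynomial in $(x,y)$ with $b$ and $c$ treated as parameters, in the basic sequence $\bigl(x^{(i)}y^{(j)}\bigr)_{(i,j)\in S_{m,n}}$ by means of the expansion formula (Theorem \ref{Expansion}) applied to the origin node-set $\bsy{O}$. Since the shift $(x,y)\mapsto(x+b,y+c)$ preserves coordinate degree, the polynomial $p(x,y):=\tilde{g}_{m,n}((x+b,y+c);\bsy{Z})$ lies in $\Pi^2_{m,n}$; and because $\tilde{g}_{i,j}((x,y);\bsy{O})=x^{(i)}y^{(j)}$ while every node of $\bsy{O}$ is $z_{i,j}=(0,0)$, Theorem \ref{Expansion} specializes to
$$p(x,y)=\sum_{i=0}^m\sum_{j=0}^n\frac{1}{i!\,j!}\bigl[\varepsilon(0,0)\,\Delta_x^i\Delta_y^j\,p(x,y)\bigr]\,x^{(i)}y^{(j)}.$$
It then remains only to identify each coefficient with $\binom{m}{i}\binom{n}{j}\,\tilde{g}_{m-i,n-j}((b,c);\bsy{L}^i\bsy{D}^j\bsy{Z})$, which is exactly the claimed expansion.

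The key step is the coefficient extraction. Writing $p(x,y)=E_x^b E_y^c\,\tilde{g}_{m,n}((x,y);\bsy{Z})$ and using that the shift operators $E_x^b,E_y^c$ commute with $\Delta_x$ and $\Delta_y$ (the same commutation already exploited in the proof of Theorem \ref{shift}), I would move the difference operators inside the shift, so that $\Delta_x^i\Delta_y^j\,p(x,y)$ equals $\bigl(\Delta_x^i\Delta_y^j\,\tilde{g}_{m,n}\bigr)((x+b,y+c);\bsy{Z})$. The general difference formula stated in the Corollary above then evaluates this to $(m)_i(n)_j\,\tilde{g}_{m-i,n-j}((x+b,y+c);\bsy{L}^i\bsy{D}^j\bsy{Z})$. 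Applying $\varepsilon(0,0)$, i.e.\ setting $x=y=0$, yields $(m)_i(n)_j\,\tilde{g}_{m-i,n-j}((b,c);\bsy{L}^i\bsy{D}^j\bsy{Z})$, and dividing by $i!\,j!$ converts the falling factorials to binomial coefficients through $(m)_i/i!=\binom{m}{i}$ and $(n)_j/j!=\binom{n}{j}$. Substituting back into the displayed expansion produces precisely the Sheffer relation.

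The argument is essentially bookkeeping once the correct expansion is chosen, so the only genuine obstacle is conceptual rather than computational: one must realize that the right basis is the basic sequence $\{x^{(i)}y^{(j)}\}$ (equivalently, the origin grid $\bsy{O}$) rather than the given grid $\bsy{Z}$, and then be careful to commute the difference operators past the shift \emph{before} invoking the general difference formula, since applying the formula first would evaluate at the wrong argument. As a consistency check, setting $(b,c)=(0,0)$ collapses the right-hand side to the expansion of $\tilde{g}_{m,n}((x,y);\bsy{Z})$ in the $x^{(i)}y^{(j)}$ basis, which is the bivariate analogue of the univariate specialization recorded after property~8. An alternative route, verifying that the right-hand side satisfies the defining interpolation conditions of $\tilde{g}_{m,n}((x+b,y+c);\bsy{Z})$ and appealing to uniqueness, would also work, but the expansion-formula approach is shorter and makes the appearance of $\bsy{L}^i\bsy{D}^j\bsy{Z}$ transparent.
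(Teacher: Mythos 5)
Your proposal is correct and follows essentially the same route as the paper: expand $\tilde{g}_{m,n}((x+b,y+c);\bsy{Z})$ via Theorem \ref{Expansion} in the basis $\{x^{(i)}y^{(j)}\}$ attached to the origin grid $\bsy{O}$, then identify the coefficients using the general difference formula and evaluation at $(0,0)$. The only cosmetic difference is that you make the commutation of the shift operators with $\Delta_x,\Delta_y$ explicit before invoking the difference formula, a step the paper leaves implicit.
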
 
\noindent \textit{Proof.}
Expanding the polynomial $\tilde{g}_{m,n}((x+b, y+c); \bsy{Z})$
under the basis $\{\tilde{g}_{i,j}((x,y); \bsy{O}) \}_{(i,j) \preceq (m,n)}$ by Theorem \ref{Expansion} and noting that 
$\tilde{g}_{i,j}((x,y);\bsy{O}) = x^{(i)} y^{(j)}$,
we have 
\begin{eqnarray*} 
\tilde{g}_{m,n}((x+b, y+c); \bsy{Z})
& =  &\sum_{i=0}^m \sum_{j=0}^n \frac{1}{i!j!} \bigg[ \varepsilon(0) \Delta_x^i \Delta_y^j \tilde{g}_{m,n}((x+b,y+c);\bsy{Z}) \bigg] 
 x^{(i)}y^{(j)} \\ 
& =  & \sum_{i=0}^m \sum_{j=0}^n \frac{1}{i!j!} \bigg[
 \varepsilon(0) (m)_i (n)_j 
 \tilde{g}_{m-i, n-j}( (x+b, y+c); \bsy{L}^i \bsy{D}^j \bsy{Z}) \bigg]
  x^{(i)}y^{(j)} \\  
  & = & \sum_{i=0}^m \sum_{j=0}^n \binom{m}{i} \binom{n}{j} \tilde{g}_{m-i, n-j}((b,c); \bsy{L}^i \bsy{D}^j\bsy{Z}) x^{(i)} y^{(j)}. 
\end{eqnarray*} 
\hfill \qedsymbol

The following observations give a relation between the univariate and bivariate  difference Gon\v{c}arov polynomials. 
Both can be checked easily using Definition 1. 
\begin{enumerate}
    \item When $m=0$ or $n=0$, we have 
    \begin{eqnarray*} 
     \tilde g_{m,0}((x,y);\bsy{Z}) &= &\tilde g_m (x; x_{0,0}, x_{1,0}, \dots, x_{m-1,0}), \\ 
     \tilde g_{0,n}((x,y);\bsy{Z})& = &\tilde g_n(y; y_{0,0}, y_{0,1}, \dots, y_{0, n-1}).
    \end{eqnarray*} 
    \item If there exist some sequences $\{\alpha_i\}$ and $\{\beta_j\}$ such that
    $z_{i,j}=(x_{i,j}, y_{i,j}) =(\alpha_i, \beta_j)$, then 
    \[
    \tilde g_{m,n}((x,y);\bsy{Z}) = \tilde{g}_m(x; \alpha_0,\dots, \alpha_{m-1}) \tilde g_{n}(y; \beta_0, \dots, \beta_{n-1})
    \]
    is the product of univariate difference Gon\v{c}arov polynomials. 
\end{enumerate}

In general, the closed formula of $\tilde g_{m,n}((x,y);\bsy{Z})$ is quite involved. However, a special case in which we have an elegant 
closed formula of $\tilde g_{m,n}((x,y);\bsy{Z})$ occurs
when the node $z_{i,j}$ is 
a linear transformation of $(i,j)$. The resulting polynomials
$\{ \tilde g_{m,n}((x,y);\bsy{Z}): m,n\in \mathbb{N}\}$ are
 called \emph{delta Abel polynomials}, since they are analogs of the Abel polynomial $A(x)=x(x-a)^{n-1}$ and satisfy a  multivariate identity of binomial type.

\begin{theorem}  \label{Abel-thm} 
Assume that $\bsy{Z}$ is a linear transformation of $\mathbb{N}^2$ by a 
$2 \times 2$ matrix $A$, i.e., there are constants $a, b, c, d$ such that $x_{i,j} = a i+bj$ and  $y_{i,j}=ci+dj$ 
for all $i, j \in \mathbb{N}$. Then 
\begin{eqnarray} \label{Abel-2}
\tilde g_{m,n}(x,y;\bsy{Z}) = (xy- x_{0,n} y - y_{m,0} x ) (x-x_{m,n}+1)^{(m-1)} (y-y_{m,n}+1)^{(n-1)}. 
\end{eqnarray} 
\end{theorem}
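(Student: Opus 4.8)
The plan is to prove the identity \eqref{Abel-2} by the same uniqueness principle used throughout this section: the bivariate Gon\v{c}arov interpolation problem with difference operators has a unique solution in $\Pi^2_{m,n}$, so it suffices to show that the claimed right-hand side, call it $G_{m,n}(x,y)$, lies in $\Pi^2_{m,n}$ and satisfies the biorthogonality conditions \eqref{definition}. Writing $P(x)=(x-x_{m,n}+1)^{(m-1)}$ and $Q(y)=(y-y_{m,n}+1)^{(n-1)}$, and recording the values $x_{0,n}=bn$, $y_{m,0}=cm$, $x_{m,n}=am+bn$, $y_{m,n}=cm+dn$, a degree count on $G_{m,n}=(xy-bn\,y-cm\,x)\,P(x)Q(y)$ shows its coordinate degree is exactly $(m,n)$, so $G_{m,n}\in\Pi^2_{m,n}$.

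The computational engine is the single-variable rule $\Delta^k(z+c)^{(\ell)}=(\ell)_k\,(z+c)^{(\ell-k)}$, valid because $\Delta$ is shift-invariant with basic sequence $(z^{(\ell)})$. Since $G_{m,n}$ involves the factors $xP(x)$ and $yQ(y)$, which are not pure upper factorials, I would first linearize them via the identities $xP(x)=(x-x_{m,n})^{(m)}+x_{m,n}P(x)$ and $yQ(y)=(y-y_{m,n})^{(n)}+y_{m,n}Q(y)$, each following from $(z-x_{m,n})^{(m)}=(z-x_{m,n})(z-x_{m,n}+1)^{(m-1)}$. Grouping $G_{m,n}=(xP)(yQ)-bn\,(P)(yQ)-cm\,(xP)(Q)$ then lets me compute $\Delta_x^i\Delta_y^j G_{m,n}$ termwise, with each term a product of an upper-factorial polynomial in $x$ and one in $y$, on which the factorial rule applies directly.

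Next I would evaluate at the node $z_{i,j}$, substituting $x_{i,j}-x_{m,n}=-a(m-i)-b(n-j)$ and $y_{i,j}-y_{m,n}=-c(m-i)-d(n-j)$. For $(i,j)=(m,n)$ the mixed terms drop out because $(m-1)_m=(n-1)_n=0$, leaving $\varepsilon(z_{m,n})\Delta_x^m\Delta_y^n G_{m,n}=m!\,n!$ as required. For $(i,j)\preceq(m,n)$ with $(i,j)\neq(m,n)$, put $p=m-i$ and $q=n-j$; after factoring out the common upper-factorial and falling-factorial factors, the three evaluated terms collapse into a constant multiple of
$$(mj-ni)\big((np-mq)-(mj-ni)\big).$$

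The crux — and the step I expect to be the main obstacle to state cleanly — is that this expression vanishes identically, because the elementary identity $np-mq=n(m-i)-m(n-j)=mj-ni$ forces the second factor to be zero. This is precisely where the \emph{linear} structure of the node set is used, together with the exact coefficients $bn=x_{0,n}$ and $cm=y_{m,0}$ in the first factor of $G_{m,n}$; any other coefficients would leave a nonzero remainder, which is what singles out the Abel form. The only remaining care is the boundary bookkeeping at $i\in\{0,m\}$ or $j\in\{0,n\}$, where the falling factorials $(m-1)_{i-1}$ and $(n-1)_{j-1}$ degenerate and the factored form above is not literally available; these I would dispatch by direct substitution, each again collapsing to $0$ by the same mechanism (using $x_{m,j}-x_{m,n}$, together with $x_{0,n}=bn$ and $y_{m,0}=cm$ at the extreme corners). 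By uniqueness this establishes $G_{m,n}=\tilde g_{m,n}(x,y;\bsy{Z})$ and hence Theorem \ref{Abel-thm}.
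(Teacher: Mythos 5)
Your argument is correct, but it takes a genuinely different route from the paper's. The paper proves Theorem \ref{Abel-thm} in one line by citing Theorem 5.1 of \cite{tringali} --- a closed formula for multivariate delta Abel polynomials valid for an arbitrary system of delta operators --- and specializing it to $(\Delta_x,\Delta_y)$ using only the fact that the upper factorials $(x^{(n)})_{n\in\mathbb{N}}$ form the basic sequence of $\Delta$. You instead verify the formula from scratch: you check that the right-hand side of \eqref{Abel-2} lies in $\Pi^2_{m,n}$ and satisfies the biorthogonality conditions \eqref{definition}, then appeal to uniqueness of the interpolant. Your computation does go through: writing $p=m-i$, $q=n-j$, the linearization $xP(x)=(x-x_{m,n})^{(m)}+x_{m,n}P(x)$ together with $\Delta^k(z+c)^{(\ell)}=(\ell)_k(z+c)^{(\ell-k)}$ reduces the evaluated bracket, for $1\le p\le m$ and $1\le q\le n$, to a multiple of $w\bigl((np-mq)-w\bigr)$ with $w=np-mq=mj-ni$, which vanishes; the boundary cases $p=0$ or $q=0$ collapse by direct substitution of the node coordinates as you indicate; and the corner $(i,j)=(m,n)$ yields $m!\,n!$ because $(m-1)_m=(n-1)_n=0$ kills the lower-degree terms. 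The paper's citation buys brevity and a proof uniform over all delta operators; your verification buys self-containedness and makes visible exactly where the linearity of $\bsy{Z}$ and the specific coefficients $x_{0,n}=bn$, $y_{m,0}=cm$ are needed. The only caveat --- shared with the statement itself --- is that $m,n\ge 1$ is implicitly assumed, since $(x-x_{m,n}+1)^{(m-1)}$ is undefined under the paper's conventions when $m=0$; those degenerate indices should be dispatched separately, e.g.\ by reduction to the univariate case.
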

\noindent \textit{Proof.}
It follows from 
Theorem 5.1 of \cite{tringali} and the fact that $(x^{(n)})_{n \in \mathbb{N}}$ is the basic sequence of the delta operator $\Delta_x$. 
\hfill \qedsymbol

\vanish{
\noindent \textsc{Remark}. Let $d$ be a fixed integer $\geq 1$. For a vector $\mathbf{v} \in \mathbb{F}^d$, we denote by $v_j$ the $j$-th component of $\mathbf{v}$. Given $\mathbf{n}=(n_1, \dots, n_d) \in \mathbb{N}^d$, we set $\mathbf{n}!=n_1!n_2!\cdots n_d!$. For $\mathbf{k}, \mathbf{n} \in \mathbb{N}^d$, $
\mathbf{k} \preceq \mathbf{n}$ means $k_i \leq n_i$ for all $i$, and $\binom{\mathbf{n}}{\mathbf{k}} = \binom{n_1}{k_1} \cdots \binom{n_d}{k_d}$. With such notations, we can define the $d$-dimensional difference Gon\v{c}arov polynomials 
with respect to the system of difference operators $(\Delta_{x_1}, \dots, \Delta_{x_d})$. Given a grid 
$Z=\{ z_{\mathbf{k}} \in \mathbb{F}^d: \mathbf{k} \in \mathbb{N}^d\}$, there is a unique polynomial $t_{\mathbf{n}}(\mathbf{x};Z)$ of coordinate degree $\mathbf{n} \in \mathbb{N}^d$ satisfying 
\[
\varepsilon(z_{\mathbf{k}}) \Delta_{x_1}^{k_1} \cdots 
\Delta_{x_d}^{k_d} (t_{\mathbf{n}}(\mathbf{x}; Z)) = \mathbf{n}! \delta_{\mathbf{k}, \mathbf{n}} 
\]
for all $\mathbf{k} \preceq \mathbf{n}$. This polynomial 
is the multivariate difference Gon\v{c}arov polynomial indexed by $\mathbf{n}$.  Theorems 2--9 can all be extended  straightforwardly to $d$-dimensions, where the summations are over the set  $\{\mathbf{k} \in \mathbb{N}^d: \mathbf{k} \preceq \mathbf{n}\}$, and the binomial coefficient $\binom{m}{i} \binom{n}{j}$ is replaced with $\binom{\mathbf{n}}{\mathbf{k}}$.  
}


\section{Bivariate Difference Gon\v{c}arov Polynomials and Integer Sequences}

In this section  we focus on the combinatorial significance of  bivariate difference Gon\v{c}arov polynomials.  
Just as the univariate difference Gon\v{c}arov polynomials describe lattice paths with a right boundary, or equivalently non-decreasing integer sequences with an upper bound, the bivariate difference Gon\v{c}arov polynomials capture the structure of a pair of  non-decreasing integer sequences whose joint distribution is bounded by a set of constraints. First we  introduce the combinatorial model and the necessary notations.  

\indent Let $m,n \in \mathbb{N}$, and suppose $ \bsy{U}$ is a set of weight-vectors
$$ \bsy{U} = \{(u_{i,j},v_{i,j})\in \mathbb{N}^2: i,j \in \mathbb{N}, \ u_{i,j} \leq u_{i',j'}, \ v_{i,j} \leq v_{i',j'} \text{ whenever } (i,j) \preceq (i',j')  \}.
$$
Define $D_{m,n}$ to be the directed graph having as vertices the points $\{(i,j) : 0 \leq i \leq m, 0 \leq j \leq n\}$ and having as edges all north steps $N=(0, 1)$ and east steps $E = (1,0)$ connecting its vertices. Assign every edge $e$ of $D_{m,n}$ a weight $wt(e)$  by
letting 
\[ wt(e) = \begin{cases} 
      u_{i,j} & \textnormal{if } e \textnormal{ is an east step from } (i,j) \textnormal{ to } (i+1,j), \\
      v_{i,j} & \textnormal{if } e \textnormal{ is a north step from } (i,j) \textnormal{ to } (i,j+1).
   \end{cases}
\]

\indent Given a lattice path $P$ from the origin $O=(0,0)$ to the point $(m,n)$, we write $P = e_1 e_2 \ldots e_{m+n}$, where $e_i \in \{E,N\}$, to record the sequence of steps of $P$. Thus, $P$ must have exactly $m$ E-steps and $n$ N-steps. Consider a pair of non-decreasing integer sequences $(\bsy{a},\bsy{b})$ with $\bsy{a} = (a_0, a_1, \ldots, a_{m-1})$ and $\bsy{b} = (b_0, b_1, \ldots, b_{n-1})$. We say that \emph{the pair
$(\bsy{a},\bsy{b})$ is bounded by $P$ with respect to the set $\bsy{U}$}  if and only if, for $r = 1, 2, \ldots, m+n$,
\[ \begin{cases} 
      a_{i} < u_{i,j} & \textnormal{if } e_r \textnormal{ is an E-step from } (i,j) \textnormal{ to } (i+1,j), \\
      b_{j} < v_{i,j} & \textnormal{if } e_r \textnormal{ is a N-step from } (i,j) \textnormal{ to } (i,j+1).
   \end{cases}
\]

\bigskip
\noindent \textbf{Example.} Let $\bsy{U} = \{(u_{i,j},v_{i,j}) : 0 \leq i \leq 3, \ 0 \leq j \leq 4\}$ be given by $u_{i,j} = j+1$ and $v_{i,j} = i+1$. The pair $(\bsy{a},\bsy{b})$ with $\bsy{a} = (2, 2, 3)$ and $\bsy{b} = (0, 0, 1, 3)$ is bounded by the lattice path $P = NNENEEN$ in bold in the figure below.  Note that the lattice path bounding $(\bsy{a},\bsy{b})$  may not be unique. For example, $P'=NNEENEN$ is another such path. 

\begin{center}
\begin{tikzpicture}[scale=1.5]
\draw[step=1cm,dotted] (0,0) grid (3,4);
\draw (-.3,-.1) node {$(0,0)$};
\draw (3.3,4.1) node  {$(3,4)$};
\draw (.5,.15) node  {$1$};  
\draw (1.5,.15) node {$1$};
\draw (2.5,.15) node {$1$};
\draw (.5,1.15) node {$2$};
\draw (1.5,1.15) node {$2$};
\draw (2.5,1.15) node {$2$};
\draw (.5,2.15) node   {$3$};
\draw (1.5,2.15) node {$3$};
\draw (2.5,2.15) node {$3$};
\draw (.5,3.15) node  {$4$};
\draw (1.5,3.15) node  {$4$};
\draw (2.5,3.15) node  {$4$};
\draw (.5,4.15) node   {$5$};
\draw (1.5,4.15) node  {$5$}; 
\draw (2.5,4.15) node  {$5$}; 
\draw (-.2,.5) node  {$1$}; 
\draw (-.2,1.5) node {$1$}; 
\draw (-.2,2.5) node {$1$}; 
\draw (-.2,3.5) node  {$1$}; 
\draw (.8,.5) node  {$2$};
\draw (.8,1.5) node {$2$};
\draw (.8,2.5) node {$2$};
\draw (.8,3.5) node {$2$};
\draw (1.8,.5) node {$3$}; 
\draw (1.8,1.5) node {$3$}; 
\draw (1.8,2.5) node {$3$};
\draw (1.8,3.5) node {$3$};
\draw (2.8,.5) node  {$4$};
\draw (2.8,1.5) node {$4$}; 
\draw (2.8,2.5) node  {$4$}; 
\draw (2.8,3.5) node {$4$}; 
\draw[very thick] (0,0) -- (0,2) -- (1,2) -- (1,3) -- (2,3) -- (3,3) -- (3,4);

\end{tikzpicture}
\end{center}


Let $\mathcal{I}(m,n)$ be the set of pairs of integer sequences $(\bsy{a},\bsy{b})$ such that $\bsy{a}=(a_0,a_1,...,a_{m-1})$ satisfies $0\leq a_0 \leq a_1 \leq \cdots \leq a_{m-1} <x$ and $\bsy{b}=(b_0,b_1,..., b_{n-1})$ satisfies $0 \leq b_0 \leq b_1 \leq \cdots \leq b_{n-1} < y$.
Denote by $\mathcal{I}_{m,n}(P; \bsy{U})$ the subset of $\mathcal{I}(m, n)$ consisting of the pairs of sequences $(\bsy{a}, \bsy{b})$ that  are bounded by $P$ with respect to $\bsy{U}$. Our main result is the following theorem.

\begin{theorem} \label{main-combin} 
Assume $x, y$ are positive integers. 
The bivariate difference Gon\v{c}arov polynomial $\tilde{g}_{m,n}((x, y);\bsy{Z})$ counts the number of pairs of sequences in $\mathcal{I}(m,n)$  that are bounded by some lattice path from $O$ to $A = (m, n)$. Explicitly, we have 
$$
\frac{1}{m!n!}\tilde{g}_{m,n}((x, y);\bsy{Z})=\left|\bigcup_{P:O\to A} \mathcal{I}_{m,n}(P; \bsy{U})\right|,
$$
where $P$ ranges over all lattice paths from $O$ to $A$ which use $N$- and $E$- steps only, and the set $\bsy{U} = \{(u_{i,j} , v_{i,j} ) :0\leq i \leq m, 0 \leq j \leq n\}$ is determined by $\bsy{Z}$ according to the relations $u_{i,j} = x - x_{i,j} , v_{i,j} = y - y_{i,j}$.
\end{theorem}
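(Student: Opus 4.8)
The plan is to prove the theorem by induction on $m+n$, showing that both sides satisfy the same recurrence and boundary conditions. Let me first establish the boundary cases. When $n=0$, a pair $(\bsy{a},\bsy{b})$ has $\bsy{b}$ empty, every lattice path from $O$ to $(m,0)$ is the single all-$E$ path $E^m$, and the bounding condition reduces to $a_i < u_{i,0} = x - x_{i,0}$, i.e., $x_{i,0} \le a_i < x$ in the shifted variable; by the univariate Theorem~\ref{Abel-thm}-style result (the first of the two ``relation between univariate and bivariate'' observations, together with the cited univariate Theorem 4.1 from \cite{sun}), the count of such non-decreasing sequences equals $\frac{1}{m!}\tilde g_m(x; x_{0,0},\dots,x_{m-1,0}) = \frac{1}{m!}\tilde g_{m,0}((x,y);\bsy{Z})$. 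The case $m=0$ is symmetric.

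For the inductive step, the key is to decompose the union on the right according to the \emph{last} step of the bounding path, or more cleanly, to peel off the constraint coming from the node $z_{m,n}$ and reduce to smaller polynomials via the difference relations. Concretely, I would reformulate the right-hand side combinatorially and match it against the linear recursion (Theorem~\ref{Linear}). Setting $L_{m,n} := \left|\bigcup_{P} \mathcal{I}_{m,n}(P;\bsy{U})\right|$, the strategy is to count \emph{all} pairs in $\mathcal{I}(m,n)$ (there are $x^{(m)}y^{(n)}/(m!n!)$ of them, since $\binom{x+m-1}{m}\binom{y+n-1}{n}$ is the number of pairs of non-decreasing sequences) and subtract those bounded by \emph{no} path from $O$ to $A$. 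A pair fails to be bounded by any monotone $NE$-path exactly when there is an ``obstruction,'' and the inclusion–exclusion over obstructions should mirror the sum $\sum_{(i,j)\preceq(m,n)} \binom{m}{i}\binom{n}{j} x_{i,j}^{(m-i)} y_{i,j}^{(n-j)} \tilde g_{i,j}$. The term indexed by $(i,j)$ should correspond to pairs whose ``first failure'' occurs at node $(i,j)$: the initial segment $(a_0,\dots,a_{i-1}),(b_0,\dots,b_{j-1})$ is freely bounded (giving the $\tilde g_{i,j}$ factor after clearing the $i!j!$ from the binomials), while the tail $(a_i,\dots,a_{m-1})$ and $(b_j,\dots,b_{n-1})$ is forced into the ranges $[x_{i,j}, x)$ and $[y_{i,j}, y)$, contributing the upper-factorial factors $x_{i,j}^{(m-i)}$ and $y_{i,j}^{(n-j)}$.

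The main obstacle will be making precise the notion of ``first failure at node $(i,j)$'' so that the decomposition of $\mathcal{I}(m,n)$ into these classes is a genuine partition. The subtlety is that whether a pair is bounded by \emph{some} monotone path is a global condition on $(\bsy{a},\bsy{b})$, not a local one, and different paths may succeed or fail on different prefixes. The natural device is a greedy/canonical path construction: given $(\bsy{a},\bsy{b})$, build the lexicographically-first monotone path that stays ``admissible'' as long as possible, taking an $E$-step from $(i,j)$ whenever $a_i < u_{i,j}$ and otherwise an $N$-step when $b_j < v_{i,j}$; the first vertex $(i,j)$ at which neither step is admissible is the canonical obstruction. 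I would need to verify that this greedy path is bounded by $(\bsy{a},\bsy{b})$ iff the pair lies in the union, and that the obstruction vertex is well-defined and partitions $\mathcal{I}(m,n)$ by the values $(i,j)$ together with the residual data. Once this partition is in hand, the count of each block factors as the product of a prefix count (which by induction equals $\frac{1}{i!j!}\tilde g_{i,j}$) and the tail count $\frac{1}{(m-i)!(n-j)!}x_{i,j}^{(m-i)}y_{i,j}^{(n-j)}$, and summing gives exactly the linear recursion~\eqref{linear} solved for $\tilde g_{m,n}$. Matching the prefactors $\binom{m}{i}\binom{n}{j}=\frac{m!n!}{i!j!(m-i)!(n-j)!}$ against $\frac{1}{m!n!}\tilde g_{m,n}$ then closes the induction.
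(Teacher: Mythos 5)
Your overall strategy is the same as the paper's: partition $\mathcal{I}(m,n)$ according to a canonical ``obstruction'' vertex $(i,j)$, factor each block as (number of admissible prefixes reaching $(i,j)$) times (number of forced tails), and compare the resulting identity for $x^{(m)}y^{(n)}$ with the linear recursion of Theorem~\ref{Linear} to solve for $\tilde g_{m,n}$. The difference is the device used to define the obstruction vertex, and that is exactly where your proposal has a genuine gap: you define it via an $E$-preferring greedy path and then explicitly defer the two claims on which everything rests, namely that the greedy endpoint is well-defined as a function of $(\bsy{a},\bsy{b})$ alone (i.e., that a pair is bounded by \emph{some} monotone path iff the greedy path succeeds) and that the resulting blocks factor as prefix $\times$ tail. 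Neither is obvious, and the second is delicate for the greedy path because the decision at a vertex $(i,l)$ with $l<j$ tests $a_i$ (a \emph{tail} entry) before testing $b_l$, so membership in the block ``greedy stops at $(i,j)$'' is not a priori a product of a prefix condition and a tail condition. The paper avoids this order-dependence by building the graph $G(\bsy{c})$ of \emph{all} admissible edges and invoking the monotonicity of $\bsy{U}$ to get the square-closing property (if both edges out of a vertex are admissible, the opposite two edges of the unit square are too), whence the reachable set has a unique maximal vertex $v(\bsy{c})$; this is the content of Lemmas 6.3--6.4 of \cite{yan} that the proof cites. Your greedy version is salvageable --- one can show from the same monotonicity that every reachable vertex from which no admissible step exists must equal $v(\bsy{c})$, so the greedy endpoint coincides with it --- but that argument is the heart of the proof and cannot be left as a ``main obstacle'' to be verified later.

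Two smaller points. First, the forced tail ranges are $[u_{i,j},x)$ and $[v_{i,j},y)$ (coming from $a_i\ge u_{i,j}$, $b_j\ge v_{i,j}$ at the stuck vertex), which have $x-u_{i,j}=x_{i,j}$ and $y-v_{i,j}=y_{i,j}$ elements and hence give the factors $x_{i,j}^{(m-i)}/(m-i)!$ and $y_{i,j}^{(n-j)}/(n-j)!$; you wrote the ranges as $[x_{i,j},x)$ and $[y_{i,j},y)$, which is inconsistent with the factors you then claim. Second, the factorization also needs the check that any admissible prefix glues to any such tail into a globally non-decreasing sequence, i.e., $a_{i-1}\le a_i$; this holds because the last $E$-step of a bounding path for the prefix forces $a_{i-1}<u_{i-1,l}\le u_{i,j}\le a_i$, but it should be said. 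Your boundary cases and the framing as induction on $m+n$ are harmless but unnecessary once the linear recursion is used: since \eqref{linear} together with $\tilde g_{0,0}=1$ determines all $\tilde g_{m,n}$ uniquely, exhibiting the same recursion for $m!\,n!\,k_{m,n}(m,n)$ finishes the proof in one step.
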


Note: For the validity of the combinatorial interpretation, 
in Theorem~\ref{main-combin}  we assume that $x_{i,j}, y_{i,j} \in \mathbb{N}$, $0 \leq x_{i,j} < x$,   $0 \leq y_{i,j} < y$,
and $x_{i,j} < x_{i'j'}$, $y_{i,j} < y_{i',j}$ for all 
$(i',j') \preceq (i,j) \preceq (m,n)$. 

\begin{proof}
Our proof uses a construction similar to that in \cite[Section 6]{yan}.  For any pair of sequences $\bsy{c}=(\bsy{a},\bsy{b}) \in \mathcal{I}(m,n)$, we construct a subgraph $G(\bsy{c})$ of $D_{m,n}$ as follows:
\begin{itemize}
    \item $O=(0,0)$ is a vertex of $G(\bsy{c})$.
    \item For any vertex $(i,j)$ of $G(\bsy{c})$, 
    \begin{itemize}
        \item if $a_i < u_{i,j}$, then add the vertex $(i+1,j)$ and the $E$-step $\{(i,j),(i+1,j)\}$ to $G(\bsy{c})$.
        \item  if $b_j < v_{i,j}$, then add the vertex $(i,j+1)$ and the $N$-step $\{(i,j),(i,j+1)\}$ to $G(\bsy{c})$.
    \end{itemize}
\end{itemize}
By definition $G(\bsy{c})$ is a connected graph containing at least the vertex $O$. By Lemmas 6.3 and 6.4 of \cite{yan}, we have that if edges $\{(i,j),(i+1,j)\}$ and $\{(i,j),(i,j+1)\}$ are both in $G(\bsy{c})$, $\{(i+1,j),(i+1,j+1)\}$ and $\{(i,j+1),(i+1,j+1)\}$ are also in $G(\bsy{c})$. Furthermore, the set of vertices of $G(\bsy{c})$ has a unique maximal vertex $v(\bsy{c})$ under the order $\preceq$.

\indent Define the set $K_{m,n}(i,j) = \{\bsy{c} \in \mathcal{I}(m,n): v(\bsy{c}) = (i,j)\}$, and let $k_{m,n}(i,j) = |K_{m,n}(i,j)|$. Then $\mathcal{I}(m,n)$ is the disjoint union of all $K_{m,n}(i,j)$ for $0 \leq i \leq m$ and $0 \leq j \leq n$, and 
\[
k_{m,n}(m,n)=|K_{m,n}(m,n)| = \left|\bigcup_{P:O\to A} \mathcal{I}_{m,n}(P; \bsy{U})\right|. 
\]

Now a pair of sequences $\bsy{c} = (\bsy{a},\bsy{b})$ is in $K_{m,n}(i,j)$ if and only if there exists a lattice path $P: O \to (i,j)$ satisfying the following:
\begin{itemize}
    \item The initial segments  $\bsy{a}'=(a_0,\ldots,a_{i-1})$ and $\bsy{b}'=(b_0,\ldots,b_{j-1})$ are bounded by $P$ with respect to $\bsy{U}$.  That is,  $(\bsy{a}',\bsy{b}')$ is in $K_{i,j}(i,j)$.  
     There are $k_{i,j}(i,j)$ such pairs of initial segments. 
    \item The integer sequence $(a_i,\ldots,a_{m-1})$ 
    satisfies $u_{i,j} \leq a_i \leq \cdots \leq a_{m-1} \leq x-1$. 
    \item The integer sequence $(b_j,\ldots,b_{n-1})$ 
    satisfies $v_{i,j} \leq b_j \leq \cdots \leq b_{n-1} \leq y-1$. 
\end{itemize}
Thus,
\begin{align*}
    k_{m,n}(i,j) &= \#\{\bsy{c} = (\bsy{a},\bsy{b}): \bsy{a}',\bsy{b}' \textnormal{ satisfy the three above conditions} \} \\
    &= k_{i,j}(i,j) \binom{x-1-u_{i,j}+m-i}{m-i}
    \binom{y-1-v_{i,j}+n-j}{n-j} \\
    &= k_{i,j}(i,j) \frac{(x-u_{i,j})^{(m-i)}}{(m-i)!}\frac{(y-v_{i,j})^{(n-j)}}{(n-j)!}.
\end{align*}
Hence,
$$ \frac{x^{(m)}}{m!}\frac{y^{(n)}}{n!} 
    = |\mathcal{I}(m,n)|
    = \sum_{i=0}^m \sum_{j=0}^n k_{m,n}(i,j) 
    = \sum_{i=0}^m \sum_{j=0}^n \frac{(x-u_{i,j})^{(m-i)}}{(m-i)!}\frac{(y-v_{i,j})^{(n-j)}}{(n-j)!} k_{i,j}(i,j), $$
or
$$   x^{(m)}y^{(n)} = \sum_{i=0}^m\sum_{j=0}^n 
\binom{m}{i}\binom{n}{j} (x-u_{i,j})^{(m-i)}(y-v_{i,j})^{(n-j)} i!j!k_{i,j}(i,j). $$
Comparing this to the linear recursion formula \eqref{linear} and 
using the initial values $\tilde{g}_{0,0}((x,y);\bsy{Z})=k_{0,0}(0,0)=1$, 
we conclude that $\tilde{g}_{m,n}((x,y);\bsy{Z}) = m!n!k_{m,n}(m,n)$, where $z_{i,j}=(x_{i,j}, y_{i,j})$ with $
x_{i,j}=x-u_{i,j}$ and $y_{i,j}=y-v_{i,j}$. 
\end{proof}

\begin{corollary}\label{cor-ipf} 
Under the same assumptions of Theorem~\ref{main-combin}, we have
\[
\left|\bigcup_{P:O\to A} \mathcal{I}_{m,n}(P; \bsy{U})\right|
= \frac{1}{m!n!} \tilde{g}_{m,n}((0,0); -\bsy{U}). 
\]
\end{corollary}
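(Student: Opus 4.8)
The plan is to obtain the corollary as an immediate consequence of Theorem~\ref{main-combin} combined with the shift-invariant formula (Theorem~\ref{shift}); no new combinatorial argument is needed, only a change of base point for the Gon\v{c}arov polynomial. First I would recall from Theorem~\ref{main-combin} that the node-set $\bsy{Z} = \{z_{i,j}\}$ attached to the cardinality on the left is given explicitly by $z_{i,j} = (x_{i,j}, y_{i,j})$ with $x_{i,j} = x - u_{i,j}$ and $y_{i,j} = y - v_{i,j}$, so that
$$
\left|\bigcup_{P:O\to A} \mathcal{I}_{m,n}(P; \bsy{U})\right| = \frac{1}{m!\,n!}\,\tilde{g}_{m,n}((x,y);\bsy{Z}).
$$
The task therefore reduces to establishing the single algebraic identity $\tilde{g}_{m,n}((x,y);\bsy{Z}) = \tilde{g}_{m,n}((0,0); -\bsy{U})$, where $-\bsy{U}$ denotes the node-set $\{(-u_{i,j}, -v_{i,j}) : (i,j) \in S_{m,n}\}$.

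The key step is to apply Theorem~\ref{shift} with the translation $(\xi, \eta) = (-x, -y)$. Under this shift the evaluation point $(x,y)$ is carried to $(x - x,\, y - y) = (0,0)$, while each node $z_{i,j} = (x - u_{i,j},\, y - v_{i,j})$ becomes $(x - u_{i,j} - x,\, y - v_{i,j} - y) = (-u_{i,j}, -v_{i,j})$; that is, $\bsy{Z} + (-x,-y) = -\bsy{U}$. Hence Theorem~\ref{shift} gives $\tilde{g}_{m,n}((0,0); -\bsy{U}) = \tilde{g}_{m,n}((x,y);\bsy{Z})$, and substituting this into the displayed identity above yields the assertion of the corollary.

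Since this is essentially a one-line consequence of shift-invariance, I do not anticipate a genuine obstacle. The only point requiring care is the bookkeeping of sign conventions in the relations $u_{i,j} = x - x_{i,j}$ and $v_{i,j} = y - y_{i,j}$, so that the translated nodes land precisely on $-\bsy{U}$ rather than on $\bsy{U}$ or a reflected grid. One should also note that the translated grid $-\bsy{U}$ continues to meet the monotonicity hypotheses under which $\tilde{g}_{m,n}$ is well-defined, which is immediate because a common translation preserves the coordinatewise order $\preceq$ on the nodes and hence all the inequalities assumed in Theorem~\ref{main-combin}.
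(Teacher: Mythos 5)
Your proposal is correct and is exactly the paper's argument: the authors also deduce the corollary from Theorem~\ref{main-combin} together with the shift-invariant formula (Theorem~\ref{shift}) applied via the relation $\bsy{Z}=(x,y)-\bsy{U}$, which is the translation by $(-x,-y)$ you describe. Your version just spells out the bookkeeping that the paper leaves implicit.
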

\begin{proof} 
It follows from Theorem \ref{shift}, the shift-invariant formula, and the  relation $\bsy{Z}=(x,y)-\bsy{U}$. 
\end{proof} 

Recall that 
for a sequence of real numbers $\mathbf{x}=(x_1, x_2, \dots, x_n)$,
the $i$-th order statistic, $x_{(i)}$, is the $i$-th term in the non-decreasing rearrangement   $x_{(1)} \leq x_{(2)} \leq \cdots \leq x_{(n)}$  of $\mathbf{x}$. 
In \cite{yan}, a generalized notion of \emph{2-dimensional parking functions}  was introduced in terms of order statistic constraints on a double sequence. 

\begin{definition} \label{U-parking} 
Given a set of nodes $\bsy{U} =  \{(u_{i,j},v_{i,j}) : i,j \in \mathbb{N} \} \subset \mathbb{N}^2$ satisfying $u_{i,j} \leq u_{i',j'}$ and $v_{i,j} \leq v_{i',j'}$ when $(i,j)\preceq (i',j')$, a pair of non-negative integer sequences $(\bsy{a},\bsy{b})$ with $\bsy{a}=(a_0, a_1, \ldots ,a_{m-1})$ and $\bsy{b}=(b_0, b_1, \ldots, b_{n-1})$ is a \textit{2-dimensional $\bsy{U}$-parking function} if and only if the order statistics of $(\bsy{a},\bsy{b})$ are bounded by some lattice path from the origin to $(m,n)$ with respect to $\bsy{U}$.
\end{definition}

Clearly the set $K_{m,n}(m,n)$ consists of those 2-dimensional $\bsy{U}$-parking functions with non-decreasing sequences $\bsy{a}$ and $\bsy{b}$.  Following the convention in the univariate case, elements in $K_{m,n}(m,n)$ are called \emph{2-dimensional increasing $\bsy{U}$-parking functions}, and 
we replace the notation $K_{m,n}(m,n)$ with  $\mathcal{IPF}^{(2)}_{m,n}( \bsy{U})$. 
Hence Corollary \ref{cor-ipf} gives a formula that enumerates the set  $\mathcal{IPF}^{(2)}_{m,n}( \bsy{U})$. 

If there exist some sequences $\bsy{\alpha} = (\alpha_0, \alpha_1, \dots)$ and 
$\bsy{\beta} = (\beta_0, \beta_1, \dots)$ such that
$(u_{i,j}, v_{i,j}) = (\alpha_i, \beta_j)$, then 
\begin{align*}
\frac{1}{m!n!} \tilde{g}_{m,n}((0,0); -\bsy{U}) = \frac{1}{m!}
\tilde{g}_m(0; -\bsy{\alpha}) 
\cdot \frac{1}{n!} \tilde{g}_n (0; -\bsy{\beta}). 
\end{align*}
In this case $\mathcal{IPF}^{(2)}_{m,n}(U)$ is the direct product of 
the set of non-decreasing integer sequences of length $m$ bounded by $\bsy{\alpha}$  and the set of non-decreasing integer sequences of length $n$ bounded by $\bsy{\beta}$

A more  interesting case is  when the node-set  $\bsy{U}$ is obtained from $\mathbb{N}^2$ by an affine transformation, i.e., there is a $2 \times 2$ matrix $A$ such that 
\begin{equation} \label{affine} 
\left[\begin{array}{c} 
u_{i,j} \\ 
v_{i,j} 
\end{array} \right] = A 
\left[\begin{array}{c} 
i \\ 
j
\end{array} \right] + 
\left[\begin{array}{c} 
s \\ 
t
\end{array} \right]. 
\end{equation} 
\vanish{
When $s=t=0$ in \eqref{affine}, the corresponding bivariate difference Gon\v{c}arov polynomials are called \emph{delta Abel polynomials}, which are analogs of the Abel polynomial $A(x)=x(x-a)^{n-1}$ and satisfy a 
 multivariate identity of  binomial type. 
 In \cite[Theorem 5.1]{tringali}, a closed formula is given for general multivariate delta Abel polynomials. Specializing  that formula to difference operators, we obtain the following result.  
 }

\begin{proposition} \label{pro-affine} 
Let $\bsy{U}$ be given by \eqref{affine} and 
\[
A=\left[ \begin{array}{cc} 
  a & b  \\
  c & d 
  \end{array} 
  \right],
  \]
  with $a, b, c, d, s, t\in \mathbb{N}$. 
Then 
\begin{align*}
\left| \mathcal{IPF}_{m,n}^{(2)}(\bsy{U}) \right|
= \frac{1}{m!n!}(st+bnt+scm)(s+am+bn+1)^{(m-1)} (t+cm+dn+1)^{(n-1)}.
\end{align*}
\end{proposition}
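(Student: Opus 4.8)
The plan is to reduce the affine case to the closed-form Abel polynomial of Theorem~\ref{Abel-thm} by means of the shift-invariant formula, and then read off the enumeration from Corollary~\ref{cor-ipf}. By Corollary~\ref{cor-ipf} we already know that $\left|\mathcal{IPF}_{m,n}^{(2)}(\bsy{U})\right| = \frac{1}{m!n!}\,\tilde{g}_{m,n}((0,0); -\bsy{U})$, so the entire content of the proposition is the evaluation of $\tilde{g}_{m,n}((0,0); -\bsy{U})$ for the affine node-set $-\bsy{U}$, whose nodes are $z_{i,j} = (-ai-bj-s,\, -ci-dj-t)$.

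First I would note that $-\bsy{U}$ is affine but not linear, so Theorem~\ref{Abel-thm} does not apply to it directly. To repair this, I apply the shift-invariant formula (Theorem~\ref{shift}) with the shift $(\xi,\eta) = (s,t)$ to the node-set $-\bsy{U}$. The shifted set $-\bsy{U} + (s,t)$ then has nodes $(-ai-bj,\, -ci-dj)$, which is precisely the image of $\mathbb{N}^2$ under the linear transformation given by the matrix $-A$. Concretely, Theorem~\ref{shift} yields $\tilde{g}_{m,n}((0,0); -\bsy{U}) = \tilde{g}_{m,n}((s,t); \bsy{W})$, where $\bsy{W}$ is the linear node-set determined by $x_{i,j} = -ai-bj$ and $y_{i,j} = -ci-dj$.

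Next I apply Theorem~\ref{Abel-thm} to $\bsy{W}$. Reading off the required nodes gives $x_{0,n} = -bn$, $y_{m,0} = -cm$, $x_{m,n} = -am-bn$, and $y_{m,n} = -cm-dn$; substituting these into formula~\eqref{Abel-2} and evaluating at $(x,y) = (s,t)$, the leading factor simplifies as $st - x_{0,n}\,t - y_{m,0}\,s = st + bnt + scm$, while the two factorial factors become $(s+am+bn+1)^{(m-1)}$ and $(t+cm+dn+1)^{(n-1)}$. Dividing the resulting product by $m!n!$ then produces exactly the claimed expression.

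The argument is a direct chaining of three earlier results, so the only thing to watch is the bookkeeping of the minus signs when passing from $-\bsy{U}$ to $\bsy{W}$ and when substituting into Theorem~\ref{Abel-thm}. I do not expect a genuine obstacle here: the proposition is essentially the specialization of the general Abel closed form to an affine shift of the lattice $\mathbb{N}^2$, with the shift-invariant formula absorbing the translation part $(s,t)$.
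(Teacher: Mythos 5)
Your proposal is correct and follows essentially the same route as the paper: reduce via Corollary~\ref{cor-ipf} to evaluating $\tilde{g}_{m,n}((0,0);-\bsy{U})$, absorb the translation $(s,t)$ with the shift-invariant formula to land on the linear grid $(-ai-bj,-ci-dj)$, and then substitute into the Abel closed form \eqref{Abel-2} at $(x,y)=(s,t)$. The sign bookkeeping you flag is handled correctly (indeed more carefully than in the paper's own display), so nothing further is needed.
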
 
\begin{proof} 
Using the shift invariant formula, we have
$$\tilde{g}_{m,n}((0,0); -\bsy{U} )
=\tilde{g}_{m,n}((s,t); \bsy{Z}),
$$
where the grid $\bsy{Z}$ has nodes $\{z_{i,j}=(x_{i,j}, y_{i,j}): (0,0) \preceq (i,j)\preceq (m,n)\}$ given by 
$x_{i,j} =-ai-bj$ and  $y_{i,j}=-ci-dj$. 
Using Equation \eqref{Abel-2} in Theorem \ref{Abel-thm} 
we obtain 
\[
\tilde{g}_{m,n}((x,y); Z) = \\
(xy+ xy_{m,0}+y x_{0,n}) (x+x_{m,n}+1)^{(m-1)}(y+y_{m,n}+1)^{(n-1)}, 
\]
which leads to the desired formula when substituted with $x=s$ and $y=t$.
\end{proof} 

\begin{corollary} \label{cor-affine} 
Let $\bsy{U}$ be given by 
\begin{equation*}
  \left[\begin{array}{c} 
u_{i,j} \\ 
v_{i,j} 
\end{array} \right] = 
\left[ \begin{array}{cc} 
  0 & b  \\
  c & 0 
  \end{array} 
  \right]
\left[\begin{array}{c} 
i \\ 
j
\end{array} \right] + 
\left[\begin{array}{c} 
1 \\ 
1
\end{array} \right]. 
\end{equation*} 

Then 
\begin{align*}
\left| \mathcal{IPF}_{m,n}^{(2)}(\bsy{U}) \right|
=\frac{1+bn+cm}{(1+bn)(1+cm)} \binom{bn+m}{m} \binom{cm+n}{n}.
\end{align*}
\end{corollary}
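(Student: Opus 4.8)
The plan is to obtain Corollary~\ref{cor-affine} as a direct specialization of Proposition~\ref{pro-affine}, followed by a routine conversion of upper factorials into binomial coefficients. The given matrix and translation correspond to the choice $a=d=0$, with $b,c$ as stated and $s=t=1$, in the affine description \eqref{affine}. Substituting these values into the formula of Proposition~\ref{pro-affine}, the linear prefactor $st+bnt+scm$ collapses to $1+bn+cm$, the factor $(s+am+bn+1)^{(m-1)}$ becomes $(bn+2)^{(m-1)}$, and the factor $(t+cm+dn+1)^{(n-1)}$ becomes $(cm+2)^{(n-1)}$. This yields the intermediate expression
\[
\left|\mathcal{IPF}^{(2)}_{m,n}(\bsy{U})\right| = \frac{1}{m!\,n!}(1+bn+cm)\,(bn+2)^{(m-1)}\,(cm+2)^{(n-1)}.
\]

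The second step is to rewrite each upper factorial as a binomial coefficient. Recalling that $z^{(k)}=z(z+1)\cdots(z+k-1)$, I would observe that $(bn+2)^{(m-1)}=(bn+2)(bn+3)\cdots(bn+m)=\tfrac{(bn+m)!}{(bn+1)!}$, so that
\[
\frac{1}{m!}(bn+2)^{(m-1)} = \frac{(bn+m)!}{m!\,(bn+1)!} = \frac{1}{1+bn}\binom{bn+m}{m},
\]
and, by the symmetric computation in the other coordinate,
\[
\frac{1}{n!}(cm+2)^{(n-1)} = \frac{(cm+n)!}{n!\,(cm+1)!} = \frac{1}{1+cm}\binom{cm+n}{n}.
\]
Multiplying these two identities and reinstating the prefactor $1+bn+cm$ produces exactly $\frac{1+bn+cm}{(1+bn)(1+cm)}\binom{bn+m}{m}\binom{cm+n}{n}$, which is the asserted closed form.

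There is no genuine obstacle here; the only care required is the index bookkeeping when passing from a rising factorial to a ratio of factorials (tracking that $(bn+2)^{(m-1)}$ has top term $bn+m$), and attention to the fact that the roles of $b$ and $c$ are interchanged between the two dimensions, since the off-diagonal matrix $\left[\begin{smallmatrix} 0 & b \\ c & 0 \end{smallmatrix}\right]$ makes the $u$-weights depend on $j$ through $b$ and the $v$-weights depend on $i$ through $c$. I would therefore present the argument simply as the above specialization together with the two factorial rewritings, noting in passing that the same result could be reached directly from Corollary~\ref{cor-ipf} and Theorem~\ref{Abel-thm}, but that routing through Proposition~\ref{pro-affine} is the cleanest.
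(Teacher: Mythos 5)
Your proposal is correct and matches the paper's intent: the corollary is stated as an immediate specialization of Proposition~\ref{pro-affine} with $a=d=0$, $s=t=1$, and the paper supplies no further argument, so your substitution plus the conversion of $(bn+2)^{(m-1)}$ and $(cm+2)^{(n-1)}$ into $\frac{m!}{1+bn}\binom{bn+m}{m}$ and $\frac{n!}{1+cm}\binom{cm+n}{n}$ is exactly the intended derivation. The index bookkeeping is right (top term $bn+m$, respectively $cm+n$), so nothing further is needed.
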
 

In particular, when $b=c=1$ in Corollary \ref{cor-affine}, the 2-dimensional increasing $\bsy{U}$-parking functions 
coincide with the \emph{increasing $(p,q)$-parking functions} defined by Cori and Poulalhon \cite{cori}, and Corollary \ref{cor-affine} gives the Narayana number 
\[
\frac{1+m+n}{(1+m)(1+n)} \binom{m+n}{m}\binom{m+n}{n}
= \frac{1}{1+m+n} \binom{1+m+n}{m} \binom{1+m+n}{n},
\]
agreeing with Proposition 14 of \cite{cori}. 


\section{Multivariate Cases} 

\indent 
Let $d$ be a fixed integer $\geq 1$. For a vector $\mathbf{v} \in \mathbb{F}^d$, we denote by $v_j$ the $j$-th component of $\mathbf{v}$. Given $\mathbf{n}=(n_1, \dots, n_d) \in \mathbb{N}^d$, we set $\mathbf{n}!=n_1!n_2!\cdots n_d!$. For $\mathbf{k}, \mathbf{n} \in \mathbb{N}^d$, $
\mathbf{k} \preceq \mathbf{n}$ means $k_i \leq n_i$ for all $1 \leq i \leq d$, and $\binom{\mathbf{n}}{\mathbf{k}} = \binom{n_1}{k_1} \cdots \binom{n_d}{k_d}$. With such notation in place, we can define the $d$-dimensional difference Gon\v{c}arov polynomials 
with respect to the system of difference operators $(\Delta_{x_1}, \dots, \Delta_{x_d})$. Given a grid 
$\bsy{Z}=\{ z_{\mathbf{k}} \in \mathbb{F}^d: \mathbf{k} \in \mathbb{N}^d\}$, there is a unique polynomial $t_{\mathbf{n}}(\mathbf{x};\bsy{Z})$ of coordinate degree $\mathbf{n} \in \mathbb{N}^d$ satisfying 
\[
\varepsilon(z_{\mathbf{k}}) \Delta_{x_1}^{k_1} \cdots 
\Delta_{x_d}^{k_d} (t_{\mathbf{n}}(\mathbf{x}; \bsy{Z})) = \mathbf{n}! \delta_{\mathbf{k}, \mathbf{n}} 
\]
for all $\mathbf{k} \preceq \mathbf{n}$. This polynomial 
is the multivariate difference Gon\v{c}arov polynomial indexed by $\mathbf{n}$, which we will denote by $\tilde{g}_{\mathbf{n}}(\mathbf{x};\bsy{Z})$.  Theorems 2--9 can all be extended  straightforwardly to $d$ dimensions, where the summations are over the set  $\{\mathbf{k} \in \mathbb{N}^d: \mathbf{k} \preceq \mathbf{n}\}$, and the binomial coefficient $\binom{m}{i} \binom{n}{j}$ is replaced with $\binom{\mathbf{n}}{\mathbf{k}}$.

To generalize all definitions and results of Section 4 to $d$ dimensions for $d > 2$, we must first define the $d$-dimensional analogs of the sets $\mathcal{I}(m,n)$ and $\mathcal{I}_{m,n}(P;\bsy{U})$. Let $\mathbf{n}=(n_1,\ldots,n_d) \in \mathbb{N}^d$, and fix a $d$-dimensional set of weight-vectors
$$ \bsy{U} = \{ u_{\mathbf{k}} \in \mathbb{N}^d : \mathbf{k} \in \mathbb{N}^d, u_{\mathbf{k},i} \leq u_{\mathbf{k}',i} \text{ whenever } \mathbf{k} \preceq \mathbf{k}' \text{ and } 1 \leq i \leq d \}, $$
where $u_{\mathbf{k},i}$ is the $i$-th entry of the point $u_{\mathbf{k}}$. Extend the weighted directed graph $D_{m,n}$ to $d$ dimensions by taking as vertices the points  $\{(k_1,\ldots,k_d): 0 \leq k_i \leq n_i \text{ for all } i=1,\ldots,d\}$ and as edges all steps $\mathbf{e}_i$, $1 \leq i \leq d$, where $\mathbf{e}_i = (0,\ldots,0,1,0,\ldots,0)$ has 1 in the $i$-th entry. An edge $\ell$ from $\mathbf{k}=(k_1,\ldots,k_i,\ldots,k_d)$ to $(k_1,\ldots,k_i+1,\ldots,k_d)$ is assigned the weight $wt(\ell) = u_{\mathbf{k},i}$. 

\indent Suppose $P = \ell_1 \ell_2 \ldots \ell_n$ is any lattice path from the origin $O = (0,\ldots,0)$ to the point $(n_1,\ldots,n_d)$, where each $\ell_i \in \{\mathbf{e}_j: 1 \leq j \leq d\}$ and $n=n_1+\cdots+n_d$. Consider a $d$-tuple of non-decreasing integer sequences $(\bsy{a}^{(1)},\ldots,\bsy{a}^{(d)})$, where $\bsy{a}^{(i)}=(a^{(i)}_0,a^{(i)}_1,\ldots,a^{(i)}_{n_i-1})$ for $1 \leq i \leq d$. 
Then we say that $(\bsy{a}^{(1)},\ldots,\bsy{a}^{(d)})$ is bounded by $P$ with respect to the set $\bsy{U}$ if the following condition is satisfied for each $r = 1,2,\ldots,n$: 
$$ a^{(i)}_{k_i} < u_{\mathbf{k},i} \text{ if } \ell_r \text{ is an } \mathbf{e}_i \text{--step  from } \bsy{k}=(k_1,\ldots,k_i,\ldots,k_d) \text{ to } (k_1,\ldots,k_i+1,\ldots,k_d).$$

For a fixed $\mathbf{x}=(x_1,\ldots,x_d) \in \mathbb{N}^d$, let the set $\mathcal{I}(\mathbf{n})$ consist of all $d$-tuples of non-decreasing integer sequences $(\bsy{a}^{(1)},\ldots,\bsy{a}^{(d)}) \in \mathbb{N}^{n_1} \times \cdots \mathbb{N}^{n_d}$ such that $0\leq \bsy{a}_j^{(i)} < x_i$ for all $1\leq i \leq d$ and $0 \leq j < n_i$,
and let $\mathcal{I}_{\mathbf{n}}(P;\bsy{U})$ be the subset of $\mathcal{I}(\mathbf{n})$ containing all $d$-tuples that are bounded by $P$ with respect to $\bsy{U}$. The following is the $d$-dimensional analog of Theorem 11 and Corollary 12.

\begin{theorem} \label{d-sequence} 
Let $\mathbf{x}=(x_1,\ldots,x_d), \mathbf{n}=(n_1,\ldots,n_d) \in \mathbb{N}^d$ and $\bsy{Z} = \{z_{\mathbf{i}} : \mathbf{i} \in \mathbb{N}^d \} \subset \mathbb{N}^d$. The multivariate difference Gon\v{c}arov polynomial $\tilde{g}_{\mathbf{n}}(\mathbf{x};\bsy{Z})$ gives the number of $d$-tuples of sequences in $\mathcal{I}(\mathbf{n})$ that are bounded by some lattice path from the origin to $A = (n_1,\ldots,n_d)$. In particular,
\begin{equation}\label{d-dim union}
\Bigg| \bigcup_{P:O \to A} \mathcal{I}_{\mathbf{n}}(P;U) \Bigg| = \frac{1}{\mathbf{n}!} \tilde{g}_{\mathbf{n}}(\mathbf{x};\bsy{Z}) = \frac{1}{\mathbf{n}!} \tilde{g}_{\mathbf{n}}((0,\ldots,0);-\bsy{U}),
\end{equation}
where the set $\bsy{U} = \{u_{\mathbf{k}} \in \mathbb{N}^d : \mathbf{k} \preceq \mathbf{n}\}$ is defined by $u_{\mathbf{k},i} = x_i - z_{\mathbf{k},i}$.
\end{theorem}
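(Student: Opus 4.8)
The plan is to mimic the bivariate argument of Theorem~\ref{main-combin} exactly, replacing the two-index bookkeeping with $d$-index multi-index notation and appealing to the $d$-dimensional linear recursion (the multivariate analog of Theorem~\ref{Linear}, which the paper asserts extends straightforwardly). First I would set up the partition of $\mathcal{I}(\mathbf{n})$ by a ``maximal vertex'' statistic. For each $d$-tuple $\bsy{c}=(\bsy{a}^{(1)},\ldots,\bsy{a}^{(d)})\in\mathcal{I}(\mathbf{n})$, I construct a subgraph $G(\bsy{c})$ of the $d$-dimensional graph $D_{\mathbf{n}}$: start with the vertex $O$, and from any vertex $\mathbf{k}=(k_1,\ldots,k_d)$ already in $G(\bsy{c})$, add the $\mathbf{e}_i$-step to $\mathbf{k}+\mathbf{e}_i$ precisely when $a^{(i)}_{k_i}<u_{\mathbf{k},i}$. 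The key structural facts I need are the $d$-dimensional analogs of Lemmas 6.3 and 6.4 of \cite{yan}: that $G(\bsy{c})$ is a ``staircase-closed'' region (if two incoming edges from a vertex are present, the opposite square-closing edges are forced) and consequently that $G(\bsy{c})$ has a unique $\preceq$-maximal vertex $v(\bsy{c})$.

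Granting that, I define $K_{\mathbf{n}}(\mathbf{i})=\{\bsy{c}\in\mathcal{I}(\mathbf{n}): v(\bsy{c})=\mathbf{i}\}$ with $k_{\mathbf{n}}(\mathbf{i})=|K_{\mathbf{n}}(\mathbf{i})|$, so that $\mathcal{I}(\mathbf{n})$ is the disjoint union over $\mathbf{i}\preceq\mathbf{n}$ of the $K_{\mathbf{n}}(\mathbf{i})$, and the top cell satisfies $k_{\mathbf{n}}(\mathbf{n})=\big|\bigcup_{P:O\to A}\mathcal{I}_{\mathbf{n}}(P;\bsy{U})\big|$. The central computation is the factorization of $k_{\mathbf{n}}(\mathbf{i})$: a tuple lies in $K_{\mathbf{n}}(\mathbf{i})$ iff, coordinate by coordinate, the initial segment $(a^{(1)}_0,\ldots)$ up to index $\mathbf{i}$ is itself bounded (contributing $k_{\mathbf{i}}(\mathbf{i})$ choices) while each tail sequence $(a^{(i)}_{k_i},\ldots,a^{(i)}_{n_i-1})$ is a non-decreasing sequence with entries in $[u_{\mathbf{i},i},x_i-1]$. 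Each tail factor is an independent choice of a multiset of size $n_i-i$ from $x_i-u_{\mathbf{i},i}$ values, giving $\binom{x_i-1-u_{\mathbf{i},i}+n_i-i}{n_i-i}=\frac{(x_i-u_{\mathbf{i},i})^{(n_i-i)}}{(n_i-i)!}$. Multiplying over $i$ yields
\[
k_{\mathbf{n}}(\mathbf{i}) = k_{\mathbf{i}}(\mathbf{i})\prod_{i=1}^{d}\frac{(x_i-u_{\mathbf{i},i})^{(n_i-i)}}{(n_i-i)!}.
\]

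Summing over all $\mathbf{i}\preceq\mathbf{n}$ gives $|\mathcal{I}(\mathbf{n})|=\prod_{i=1}^{d}\frac{x_i^{(n_i)}}{n_i!}$, and after clearing the factorials this is exactly the $d$-dimensional linear recursion with $z_{\mathbf{i},i}=x_i-u_{\mathbf{i},i}$; comparing term by term (with the base case $\tilde{g}_{\mathbf{0}}=k_{\mathbf{0}}(\mathbf{0})=1$) forces $\tilde{g}_{\mathbf{n}}(\mathbf{x};\bsy{Z})=\mathbf{n}!\,k_{\mathbf{n}}(\mathbf{n})$, which is the first equality in \eqref{d-dim union}; the second equality then follows from the shift-invariant formula as in Corollary~\ref{cor-ipf}, using $\bsy{Z}=\mathbf{x}-\bsy{U}$. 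The main obstacle I anticipate is not the recursion-matching — that is formal multi-index bookkeeping — but rather verifying carefully that the unique-maximal-vertex property of $G(\bsy{c})$ survives in dimension $d>2$, since the cited lemmas of \cite{yan} are stated for the planar case; I would either re-prove the staircase-closure property directly from the monotonicity of $\bsy{U}$ and the non-decreasing hypothesis on the sequences, or argue by induction on $d$ by intersecting with coordinate hyperplanes.
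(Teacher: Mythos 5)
Your proposal is correct and follows exactly the route the paper intends: the paper gives no separate proof of Theorem~\ref{d-sequence}, presenting it as the straightforward $d$-dimensional extension of Theorem~\ref{main-combin}, whose proof (the $G(\bsy{c})$ construction, partition by the unique maximal vertex, the product formula for $k_{\mathbf{n}}(\mathbf{i})$, and comparison with the linear recursion, followed by shift-invariance) is precisely what you describe. The only blemish is notational --- in your tail-count the exponent should be $n_i-i_i$ (the $i$-th coordinate of the multi-index $\mathbf{i}$), not $n_i-i$ --- and your instinct to re-verify the unique-maximal-vertex lemmas of \cite{yan} in dimension $d>2$ is well placed, since that is the one point the paper leaves implicit.
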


\indent As in the 2-dimensional case, we can define a $d$-dimensional $\bsy{U}$-parking function according to certain constraints imposed on the order statistics of a $d$-tuple.

\begin{definition}
Let $d > 2$ be an integer. 
Given a set of nodes $\bsy{U} = \{ u_{\mathbf{k}} \in \mathbb{F}^d : \mathbf{k} \in \mathbb{N}^d\}$ such that $u_{\mathbf{k},i} \leq u_{\mathbf{k}',i}$ whenever $\mathbf{k} \preceq \mathbf{k}'$ and $1 \leq i \leq d$, a $d$-tuple of integer sequences $(\bsy{a}^{(1)},\ldots,\bsy{a}^{(d)}) \in \mathbb{N}^{n_1} \times \cdots \times \mathbb{N}^{n_d}$ is a $d$-dimensional $\bsy{U}$-parking function if and only if the order statistics of $(\bsy{a}^{(1)},\ldots,\bsy{a}^{(d)})$ are bounded by some lattice path from the origin to $(n_1,\ldots,n_d)$ with respect to $\bsy{U}$.
\end{definition}

Of particular importance to us are the \textit{d-dimensional increasing $\bsy{U}$-parking functions} $(\bsy{a}^{(1)},\ldots,\bsy{a}^{(d)})$, which have non-decreasing constituent sequences $\bsy{a}^{(1)},\ldots,\bsy{a}^{(d)}$. Using notation consistent with the bivariate case, we will denote the set of all $d$-dimensional increasing $\bsy{U}$-parking functions by $\mathcal{IPF}^{(d)}_{\mathbf{n}}(\bsy{U})$. Note then that the set $\mathcal{IPF}^{(d)}_{\mathbf{n}}(\bsy{U})$ is equivalent to the union of the sets $\mathcal{I}_{\mathbf{n}}(P;U)$ over all lattice paths $P$ from the origin to $A=(n_1,\ldots,n_d)$, so that Theorem 15 also yields a formula for $\# \mathcal{IPF}^{(d)}_{\mathbf{n}}(\bsy{U})$ in terms of a multivariate difference Gon\v{c}arov polynomial.


In the following we  enumerate the set 
$\mathcal{IPF}^{(d)}_{\mathbf{n}}(\bsy{U})$ when the node-set 
$\bsy{U}$ is affine, meaning there exists a $d \times d$ matrix $A$ and vector $\mathbf{s} = (s_1,\ldots,s_d) \in \mathbb{F}^d$ such that $u_{\mathbf{k}} = A \mathbf{k} +\mathbf{s}$ for all $\mathbf{k} \in \mathbb{N}^d$, where $\mathbf{k}$, $u_{\mathbf{k}}$,  and $\mathbf{s}$ are treated as column vectors.
We express such an affine node-set $\bsy{U}$ by $\bsy{U} = A \mathbb{N}^d + \mathbf{s}$.  
Increasing $\bsy{U}$-parking functions associated to affine $\bsy{U}$  relate to the notion of $(p_1, p_2, \dots, p_d)$-parking functions in  \cite{cori} and the notion of $G$-parking functions \cite{Postnikov}, when
$G$ is a complete $d$-partite graph with a distinguished root. 

We use a closed formula for $d$-dimensional delta Abel polynomials proved in \cite[Theorem 6.1]{tringali}. The following statement is specialized to the system of operators
$(\Delta_{x_1}, \ldots, \Delta_{x_d})$. 

\begin{theorem} \cite{tringali}  \label{d-abel} 
Let $\mathbf{x}$, $\mathbf{n}$, and $\bsy{Z}$ be as in Theorem 15, and suppose $\bsy{Z} = A \mathbb{N}^d$ for some $d \times d$ matrix $A = (a_{i,j})$. 
Let $B = (b_{i,j})$ be the $d \times d$ diagonal matrix defined by $b_{i,i} = x_i - z_{\mathbf{n},i}$, and let $C = (c_{i,j})$ be the $d \times d$ matrix defined by $c_{i,j} = z_{n_i \mathbf{e}_i,j}$. Then
$$ \tilde g_{\mathbf{n}}(\mathbf{x}; \bsy{Z}) =  \det(B + C) \prod_{i=1}^d (x_i - z_{\mathbf{n},i} + 1)^{(n_i-1)}.$$
\end{theorem}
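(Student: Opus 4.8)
The plan is to obtain the formula as a direct specialization of the general closed form for multivariate delta Abel polynomials established in Theorem~6.1 of \cite{tringali}, mirroring the route taken for the bivariate case in Theorem~\ref{Abel-thm}. That general result applies to an arbitrary system of delta operators $(\mathfrak{d}_1, \dots, \mathfrak{d}_d)$ and a linear node-set $\bsy{Z} = A\mathbb{N}^d$, expressing the associated delta Abel polynomial as the product of a single determinantal factor with $d$ univariate factors, where the $i$-th univariate factor is built from the basic sequence of $\mathfrak{d}_i$. The one essential input is the fact recalled in Section~2 that the basic sequence of the backward difference operator $\Delta_{x_i}$ is the upper factorial sequence $(x_i^{(n)})_{n \in \mathbb{N}}$.

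First I would set $\mathfrak{d}_i = \Delta_{x_i}$ for each $i$ and substitute the upper factorials for the abstract basic sequence. This turns the $i$-th univariate factor into $(x_i - z_{\mathbf{n},i}+1)^{(n_i-1)}$, producing the product $\prod_{i=1}^d (x_i - z_{\mathbf{n},i}+1)^{(n_i-1)}$ in the statement. Next I would unwind the determinantal factor into the form $\det(B+C)$: the diagonal matrix $B$ encodes the data $x_i - z_{\mathbf{n},i}$ obtained from evaluating at the maximal node $z_{\mathbf{n}}$, while the matrix $C$, whose $(i,j)$-entry is the coordinate $z_{n_i\mathbf{e}_i, j}$ of the node lying on the $i$-th axis, encodes the contributions of the remaining directions. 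Expanding $\det(B+C)$ and invoking the linearity $z_{\mathbf{i}} = A\mathbf{i}$ should reproduce the determinant appearing in the abstract formula entry by entry.

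The main difficulty is entirely a matter of bookkeeping: reconciling the index conventions of the general theorem in \cite{tringali} with the explicit matrices $B$ and $C$ used here, and verifying that the basic-sequence substitution lands exactly on the shifted upper factorials with arguments $x_i - z_{\mathbf{n},i}+1$ and degrees $n_i-1$. As a consistency check I would specialize to $d=2$: here $B = \mathrm{diag}(x - x_{m,n},\, y - y_{m,n})$ and $C = \left(\begin{smallmatrix} x_{m,0} & y_{m,0} \\ x_{0,n} & y_{0,n} \end{smallmatrix}\right)$, and using the linearity of $\bsy{Z}$ one computes $\det(B+C) = xy - x_{0,n}y - y_{m,0}x$, which recovers Theorem~\ref{Abel-thm} and confirms that the specialization has been carried out correctly.
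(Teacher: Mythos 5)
Your proposal is correct and follows exactly the paper's route: the paper presents this theorem as a direct specialization of Theorem~6.1 of \cite{tringali} to the system $(\Delta_{x_1},\dots,\Delta_{x_d})$, using only the fact that the basic sequence of $\Delta_{x_i}$ is the upper factorial sequence, just as in the bivariate Theorem~\ref{Abel-thm}. Your $d=2$ consistency check, recovering $\det(B+C)=xy-x_{0,n}y-y_{m,0}x$ via linearity of $\bsy{Z}$, is a correct and worthwhile verification.
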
 


Theorem \ref{d-abel} 
yields the following result on the enumeration of $d$-dimensional increasing $\boldsymbol{U}$-parking functions.

\begin{proposition}
Let $\mathbf{x}$ and  $\mathbf{n}$ be as in Theorem 15, and suppose $\bsy{U} = A \mathbb{N}^d + \mathbf{s}$ for some $d \times d$ matrix $A = (a_{i,j})$ and $\mathbf{s} = (s_1,\ldots,s_d) \in \mathbb{Z}^d$. Let $B = (b_{i,j})$ be the $d \times d$ diagonal matrix defined by $b_{i,i} = x_i + z_{\mathbf{n},i}$, and let $C = (c_{i,j})$ be the $d \times d$ matrix defined by $c_{i,j} = -z_{n_i \mathbf{e}_i,j}$, where $\bsy{Z}=A \mathbb{N}^d$. Then
$$ \# \mathcal{IPF}^{(d)}_{\mathbf{n}}(\bsy{U}) = \frac{1}{\mathbf{n}!} \det(B + C) \prod_{i=1}^d (s_i + z_{\mathbf{n},i} + 1)^{(n_i-1)}.$$
\end{proposition}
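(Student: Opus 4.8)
The plan is to reduce the Proposition directly to Theorem~\ref{d-abel} (the closed formula for $d$-dimensional delta Abel polynomials) via the shift-invariant formula, exactly mirroring the two-variable argument in the proof of Proposition~\ref{pro-affine}. By Theorem~\ref{d-sequence}, we know that
\[
\# \mathcal{IPF}^{(d)}_{\mathbf{n}}(\bsy{U}) = \frac{1}{\mathbf{n}!}\, \tilde g_{\mathbf{n}}((0,\ldots,0); -\bsy{U}),
\]
so the task is purely to evaluate the right-hand side when $\bsy{U} = A\mathbb{N}^d + \mathbf{s}$ is affine.

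\emph{First} I would apply the $d$-dimensional shift-invariant formula (the extension of Theorem~\ref{shift}, valid by the remark that Theorems~2--9 extend straightforwardly to $d$ dimensions). Writing $u_{\mathbf{k}} = A\mathbf{k} + \mathbf{s}$, so that $-\bsy{U} = -A\mathbb{N}^d - \mathbf{s}$, I would shift every node by $+\mathbf{s}$ and simultaneously shift the evaluation point from $\mathbf{0}$ to $\mathbf{s}$:
\[
\tilde g_{\mathbf{n}}((0,\ldots,0); -\bsy{U}) = \tilde g_{\mathbf{n}}(\mathbf{s}; \bsy{Z}),
\qquad \text{where } \bsy{Z} = -A\mathbb{N}^d.
\]
A cleaner route, matching the sign conventions in the Proposition's statement, is to set $\bsy{Z} = A\mathbb{N}^d$ (the positive copy) and carry the sign through the evaluation; the statement's definitions $b_{i,i} = x_i + z_{\mathbf{n},i}$ and $c_{i,j} = -z_{n_i\mathbf{e}_i,j}$ are designed precisely so that applying Theorem~\ref{d-abel} to the shifted grid reproduces the matrix $B+C$ appearing here. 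So the key bookkeeping step is to check that evaluating the Abel formula of Theorem~\ref{d-abel} at the shifted point, with nodes $-A\mathbb{N}^d$, converts $(x_i - z_{\mathbf{n},i})$ into $(s_i + z_{\mathbf{n},i})$ inside the product and flips the off-diagonal entries of $C$, yielding exactly $\det(B+C)\prod_{i=1}^d (s_i + z_{\mathbf{n},i}+1)^{(n_i-1)}$.

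\emph{Then} I would substitute the output of Theorem~\ref{d-abel} and read off the claimed formula. Concretely, Theorem~\ref{d-abel} gives $\tilde g_{\mathbf{n}}(\mathbf{x};\bsy{Z}') = \det(B'+C')\prod_i (x_i - z'_{\mathbf{n},i}+1)^{(n_i-1)}$ for a node-grid $\bsy{Z}' = A'\mathbb{N}^d$; after the shift, applying this with $\mathbf{x} = \mathbf{s}$ and the grid $\bsy{Z}' = -A\mathbb{N}^d$ makes $z'_{\mathbf{n},i} = -z_{\mathbf{n},i}$, so $x_i - z'_{\mathbf{n},i} = s_i + z_{\mathbf{n},i}$, matching both the product term and the diagonal of $B$. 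Dividing by $\mathbf{n}!$ completes the proof.

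\emph{The main obstacle} is not any deep argument but rather the sign- and index-bookkeeping in reconciling the two grids: the Proposition defines $B$ and $C$ in terms of $\bsy{Z} = A\mathbb{N}^d$ (the \emph{un}-negated grid), whereas the shift-invariant formula naturally produces the grid $-A\mathbb{N}^d$. I would be careful to verify that the entry $c_{i,j} = -z_{n_i\mathbf{e}_i,j}$ in the statement corresponds correctly to the $C$-matrix entry $z'_{n_i\mathbf{e}_i,j}$ produced by Theorem~\ref{d-abel} on the negated grid, and likewise that the matrix $A$ appearing through $\bsy{s}$ is not inadvertently negated. Once the correspondence $z'_{\mathbf{i}} = -z_{\mathbf{i}}$ is pinned down, every term matches term-by-term and no genuine computation remains; the only subtlety is ensuring the affine shift $\mathbf{s}$ is placed on the evaluation point rather than absorbed into the matrix, which the shift-invariant formula handles cleanly.
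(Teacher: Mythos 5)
Your proposal is correct and follows essentially the same route as the paper: reduce via Theorem~15 and the shift-invariant formula to $\tilde g_{\mathbf{n}}(\mathbf{s};-\bsy{Z})$, then apply the delta Abel formula of Theorem~16 to the grid with transition matrix $-A$, which is exactly the paper's (very brief) proof. Your sign-bookkeeping is also right, and it correctly reveals that the $x_i$ in the statement's definition of $b_{i,i}$ should be $s_i$ to be consistent with the product term and with Corollary~18.
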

\begin{proof}
From Theorem \ref{d-sequence} and the shift-invariant property of Gon\v{c}arov polynomials, we have 
\begin{equation*}
    \# \mathcal{IPF}^{(d)}_{\mathbf{n}}(\bsy{U}) = 
    \frac{1}{\mathbf{n}!}  \tilde{g}_{\mathbf{n}}(\bsy{0};-\bsy{U}) 
    = \frac{1}{\mathbf{n}!}  \tilde{g}_{\mathbf{n}}(\mathbf{s}; -\bsy{Z} ).
\end{equation*}
Then we use Theorem 16 and notice that the transition matrix for the grid $-\bsy{Z}$ is  $-A$. 
\end{proof}

\begin{corollary}
Suppose $\bsy{U} = A \mathbb{N}^d + \mathbf{s}$, where $A = (a_{i,j})$ is a $d \times d$ matrix with
\[
a_{i,j} = \left\{ \begin{array}{ll} 
 \alpha_j & \text{ if  } i \neq j \\ 
 0    & \text{ i=j} 
 \end{array} \right. 
\]
and $\mathbf{s} = (s_1,\ldots,s_d) \in \mathbb{Z}^d$. Then we have
$$ \# \mathcal{IPF}^{(d)}_{\mathbf{n}}(\bsy{U}) = \frac{1}{\mathbf{n}!} \Bigg( 1 - \sum_{j=1}^d \frac{\alpha_j n_j}{s_j + N} \Bigg) \prod_{i=1}^d (s_i + N)(s_i + N - \alpha_i n_i + 1)^{(n_i-1)}, $$
where $N = \sum_{i=1}^d \alpha_i n_i$. 
\end{corollary}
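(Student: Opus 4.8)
The plan is to specialize the preceding Proposition directly to the matrix $A$ described in the statement, so the whole argument reduces to a determinant evaluation. Since the auxiliary grid is $\bsy{Z} = A\mathbb{N}^d$, its node at index $\mathbf{k}$ is $z_{\mathbf{k}} = A\mathbf{k}$, and I would first record the two kinds of grid values entering the Proposition. The $i$-th coordinate of $z_{\mathbf{n}}$ is $z_{\mathbf{n},i} = \sum_{j \neq i}\alpha_j n_j = N - \alpha_i n_i$, where $N = \sum_{j=1}^d \alpha_j n_j$ as in the statement. The vector $z_{n_i\mathbf{e}_i}$ is $n_i$ times the $i$-th column of $A$, so its $j$-th coordinate is $z_{n_i\mathbf{e}_i,j} = n_i a_{j,i}$, which equals $\alpha_i n_i$ when $j \neq i$ and $0$ when $j = i$.

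With these values in hand I would assemble the matrix $B+C$ of the Proposition. Its diagonal entries are $b_{i,i} + c_{i,i} = (s_i + z_{\mathbf{n},i}) + 0 = s_i + N - \alpha_i n_i$, while for $j \neq i$ its off-diagonal entries are $c_{i,j} = -z_{n_i\mathbf{e}_i,j} = -\alpha_i n_i$. The key structural observation is that this is a rank-one perturbation of a diagonal matrix, namely $B+C = \operatorname{diag}(s_1 + N, \ldots, s_d + N) - \mathbf{m}\,\mathbf{1}^{\top}$, where $\mathbf{m} = (\alpha_1 n_1, \ldots, \alpha_d n_d)^{\top}$ and $\mathbf{1} = (1,\ldots,1)^{\top}$: subtracting $\mathbf{m}\mathbf{1}^{\top}$ puts $-\alpha_i n_i$ in every entry of row $i$, and the diagonal matrix corrects the diagonal to $s_i + N - \alpha_i n_i$.

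The main step, and the only one needing any care, is evaluating $\det(B+C)$. I would apply the matrix determinant lemma for a rank-one update, $\det(D - \mathbf{m}\mathbf{1}^{\top}) = \det(D)\bigl(1 - \mathbf{1}^{\top}D^{-1}\mathbf{m}\bigr)$ with $D = \operatorname{diag}(s_i + N)$, which yields $\det(B+C) = \prod_{i=1}^d (s_i + N)\bigl(1 - \sum_{j=1}^d \tfrac{\alpha_j n_j}{s_j + N}\bigr)$; the same value follows from elementary column operations if one prefers not to cite the lemma. Finally I would substitute $z_{\mathbf{n},i} = N - \alpha_i n_i$ into the product $\prod_{i=1}^d (s_i + z_{\mathbf{n},i} + 1)^{(n_i-1)}$ appearing in the Proposition, turning it into $\prod_{i=1}^d (s_i + N - \alpha_i n_i + 1)^{(n_i-1)}$, and merge the factor $\prod_i (s_i + N)$ from the determinant with this product. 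Collecting terms gives precisely $\frac{1}{\mathbf{n}!}\bigl(1 - \sum_{j=1}^d \tfrac{\alpha_j n_j}{s_j + N}\bigr)\prod_{i=1}^d (s_i + N)(s_i + N - \alpha_i n_i + 1)^{(n_i-1)}$, which is the claimed formula.
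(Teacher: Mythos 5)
Your proposal is correct and follows essentially the same route as the paper: specialize Proposition 17 to this $A$, compute the entries of $B+C$ (which agree with yours), evaluate $\det(B+C)$, and substitute $z_{\mathbf{n},i}=N-\alpha_i n_i$. The only divergence is that you evaluate the determinant by recognizing $B+C$ as a rank-one update of $\mathrm{diag}(s_i+N)$ and invoking the matrix determinant lemma, whereas the paper performs explicit column subtractions and a cofactor expansion along the first column — both give $\bigl(1-\sum_{j}\tfrac{\alpha_j n_j}{s_j+N}\bigr)\prod_i (s_i+N)$, and you even note the column-operation alternative yourself.
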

\begin{proof} 
The result follows from Proposition 17 and the computation of $\det(B+C)$, where $B$ and $C$ are the $d \times d$ matrices defined in Proposition 17. We have
\begin{align*}
B+C &= 
\left[\begin{array}{cccc} 
s_1 + N - \alpha_1 n_1 & -\alpha_1 n_1 & \cdots & -\alpha_1 n_1 \\ 
-\alpha_2 n_2 & s_2 + N - \alpha_2 n_2 & \cdots & -\alpha_2 n_2 \\
\vdots & \vdots & \ddots & \vdots \\
-\alpha_d n_d & -\alpha_d n_d & \cdots & s_d + N - \alpha_d n_d \\
\end{array} \right]
\end{align*}
Subtracting  column $j-1$ from column $j$ for $j=d, d-1, \dots, 2$, we get 
\begin{align*} 
\det(B+C) &= 
 \left|\begin{array}{cccccc}
s_1 + N - \alpha_1 n_1 & -s_1 - N & 0 & \cdots & 0 & 0 \\
-\alpha_2 n_2 & s_2 + N & -s_2 - N & \cdots & 0 & 0 \\
-\alpha_3 n_3 & 0 & s_3 + N & \cdots & 0 & 0 \\
\vdots & \vdots & \vdots & \ddots & \vdots & \vdots \\
-\alpha_{d-1} n_{d-1} & 0 & 0 & \cdots & s_{d-1} + N & -s_{d-1} - N \\
-\alpha_d n_d & 0 & 0 & \cdots & 0 & s_d + N \\
\end{array}\right| \\
&= (s_1 + N - \alpha_1 n_1) \prod_{i=2}^d (s_i + N) + \sum_{i=2}^d (-1)^{1+i}(-\alpha_i n_i) \bigg[ \prod_{j=1}^{i-1}(-s_j - N) \bigg] \bigg[ \prod_{j=i+1}^d (s_j + N) \bigg] \\
&= \Bigg( 1 - \sum_{j=1}^d \frac{\alpha_j n_j}{s_j + N} \Bigg) \prod_{i=1}^d (s_i + N),
\end{align*}
where $N = \sum_{i=1}^d \alpha_i n_i$. Also, note that $z_{\mathbf{n},i} = u_{\mathbf{n},i} - s_i = N - \alpha_i n_i$.
\end{proof} 

When $\alpha_i = s_i = 1$ for all $1 \leq i \leq d$ in Corollary 18,  the second and the third authors showed \cite{Snider} that the set of $d$-dimensional increasing $\bsy{U}$-parking functions is precisely the set of increasing $(p_1,p_2,\ldots,p_d)$-parking functions described by Cori and Poulalhon \cite{cori}. 
 According to \cite[Proposition 19]{cori}, the latter are counted by the formula
$$ \frac{1}{N+1} \prod_{i=1}^d \binom{N+1}{n_i}, $$
which matches  the value for $\# \mathcal{IPF}^{(d)}_{\mathbf{n}}(\bsy{U})$ given by Corollary 18 after simple algebraic manipulation.


\end{document}